\newtheorem{theorem}{Theorem}[section]
\newtheorem*{theorem A}{Theorem A}
\newtheorem*{theorem B}{N\"olker's Theorem}
\newtheorem{lemma}{Lemma}[section]
\theoremstyle{remark}
\newtheorem{remark}{Remark}[section]
\theoremstyle{remark}
\theoremstyle{definition}
\newtheorem{definition}{Definition}[section]
\numberwithin{equation}{section}
\def\({\left ( }
\def\){\right )}
\def\<{\left < }
\def\>{\right >}
\begin{document}

\noindent {\sc {Mathematical Sciences And Applications E-Notes}}

\noindent {\sc \small Volume 1  No. 1 pp. 000--000 (2013) \copyright
MSAEN}

\vspace{2cm}

\title{On Hadamard Type Integral Inequalities for nonconvex Functions}
\author{Mehmet Zeki Sarikaya, Hakan Bozkurt and Necmettin Alp}
\address{Department of Mathematics, \ Faculty of Science and Arts, D\"{u}zce
University, D\"{u}zce-TURKEY}
\email{sarikayamz@gmail.com, insedi@yahoo.com and placenn@gmail.com}

\subjclass[2000]{26D07, 26D10, 26D99}
\date{January 1, 2013 and, in revised form, June 22, 2013.}
\dedicatory{\textrm{(Communicated by Murat TOSUN)}}
\keywords{Hermite-Hadamard's inequalities, non-convex functions, H\"{o}%
lder's inequality.}

\begin{abstract}
In this paper, we extend some estimates of the right and left hand side of a
Hermite- Hadamard type inequality for nonconvex functions whose derivatives
absolute values are $\varphi $-convex and quasi-$\varphi $-convex was
introduced by Noor in \cite{Noor1}.
\end{abstract}
\maketitle

\section{Introduction}

It is well known that if $f$ is a convex function on the interval $I=\left[
a,b\right] $ and $a,b\in I$ with $a<b$, then%
\begin{equation}
f\left( \frac{a+b}{2}\right) \leq \frac{1}{b-a}\int\limits_{a}^{b}f\left(
x\right) dx\leq \frac{f\left( a\right) +f\left( b\right) }{2}.  \label{H}
\end{equation}%
which is known as the Hermite-Hadamard inequality for the convex functions.

Both inequalities hold in the reversed direction if $f$ is concave. We note
that Hadamard's inequality may be regarded as a refinement of the concept of
convexity and it follows easily from Jensen's inequality. Hadamard's
inequality for convex functions has received renewed attention in recent
years and a remarkable variety of refinements and generalizations have been
found (see, for example, \cite{SSDRPA}, \cite{Dragomir2}, \cite{USK}-\cite{K}%
, \cite{CEMPJP}, \cite{sarikaya}-\cite{yildiz}).

In \cite{USK} some inequalities of Hermite-Hadamard type for differentiable
convex mappings connected with the left part of (\ref{H}) were proved by
using the following lemma:

\begin{lemma}
\label{l1} Let $f:I^{\circ }\subset \mathbb{R}\rightarrow \mathbb{R}$, be a
differentiable mapping on $I^{\circ }$, $a,b\in I^{\circ }$ ($I^{\circ }$ is
the interior of $I$) with $a<b$. If \ $f^{\prime }\in L\left( \left[ a,b%
\right] \right) $, then we have%
\begin{equation}
\begin{array}{l}
\dfrac{1}{b-a}\int_{a}^{b}f(x)dx-f\left( \dfrac{a+b}{2}\right) \\
\\
\ \ \ \ \ =\left( b-a\right) \left[ \int_{0}^{\frac{1}{2}}tf^{\prime
}(ta+(1-t)b)dt+\int_{\frac{1}{2}}^{1}\left( t-1\right) f^{\prime
}(ta+(1-t)b)dt\right] .%
\end{array}
\label{HH}
\end{equation}
\end{lemma}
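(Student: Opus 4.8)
The plan is to establish the identity by integrating by parts on each of the two integrals on the right-hand side and then collapsing the resulting boundary terms and integrals via a single linear change of variables. Write $J_1=\int_0^{1/2}tf'(ta+(1-t)b)\,dt$ and $J_2=\int_{1/2}^1(t-1)f'(ta+(1-t)b)\,dt$, so that the claim is $(b-a)(J_1+J_2)=\frac{1}{b-a}\int_a^b f(x)\,dx-f\(\frac{a+b}{2}\)$.

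First I would treat $J_1$. Since $\frac{d}{dt}f(ta+(1-t)b)=(a-b)f'(ta+(1-t)b)$, an antiderivative of the factor $f'(ta+(1-t)b)$ is $\frac{1}{a-b}f(ta+(1-t)b)$; this is where the hypothesis $f'\in L([a,b])$ is used to justify integration by parts. Taking $u=t$ and $dv=f'(ta+(1-t)b)\,dt$ gives
\[
J_1=\frac{1}{2(a-b)}f\(\frac{a+b}{2}\)-\frac{1}{a-b}\int_0^{1/2}f(ta+(1-t)b)\,dt .
\]
The same computation for $J_2$ with $u=t-1$ produces the identical boundary term at $t=1/2$ (with sign reconciled because $u$ changes from $t$ to $t-1$) and vanishes at $t=1$, yielding
\[
J_2=\frac{1}{2(a-b)}f\(\frac{a+b}{2}\)-\frac{1}{a-b}\int_{1/2}^1 f(ta+(1-t)b)\,dt .
\]

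Adding these, the two half-integrals combine into $\int_0^1 f(ta+(1-t)b)\,dt$ and the boundary terms add to $\frac{1}{a-b}f\(\frac{a+b}{2}\)$. Then I would apply the substitution $x=ta+(1-t)b$, $dx=(a-b)\,dt$, which sends $t=0$ to $x=b$ and $t=1$ to $x=a$, turning $\int_0^1 f(ta+(1-t)b)\,dt$ into $\frac{1}{b-a}\int_a^b f(x)\,dx$. Substituting back and multiplying through by $(b-a)$ gives exactly the stated identity. There is no real obstacle here: the only points requiring care are the bookkeeping of signs coming from the factor $a-b<0$ in the antiderivative and from the orientation reversal in the change of variables, and the verification that the boundary contributions from $J_1$ and $J_2$ do not cancel but reinforce at $t=1/2$.
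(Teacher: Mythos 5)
Your proof is correct: the integration by parts on each piece, the reinforcing boundary terms at $t=\frac12$, and the change of variables $x=ta+(1-t)b$ all check out, including the sign bookkeeping from the factor $a-b<0$. This is essentially the same argument the paper uses (for the analogous identity (\ref{7}) in its $\varphi$-convex setting, the lemma itself being quoted from \cite{USK} without proof), so there is nothing to add.
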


One more general result related to (\ref{HH}) was established in \cite%
{USKMEO}. The main result in \cite{USK} is as follows:

\begin{theorem}
\label{t1} Let $f:I\subset \mathbb{R}\rightarrow \mathbb{R}$, be a
differentiable mapping on $I^{\circ }$, $a,b\in I$ with $a<b$. If the
mapping $\left\vert f^{\prime }\right\vert $ is convex on $\left[ a,b\right]
$, then%
\begin{equation}
\left\vert \frac{1}{b-a}\int_{a}^{b}f(x)dx-f\left( \frac{a+b}{2}\right)
\right\vert \leq \frac{b-a}{4}\left( \frac{\left\vert f^{\prime
}(a)\right\vert +\left\vert f^{\prime }(b)\right\vert }{2}\right) .
\label{H1}
\end{equation}
\end{theorem}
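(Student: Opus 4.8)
The plan is to start from the identity (\ref{HH}) of Lemma \ref{l1}, pass to absolute values on both sides, and then estimate the right-hand side by exploiting the convexity of $\left\vert f^{\prime }\right\vert $. Applying the triangle inequality to (\ref{HH}), and using that $\left\vert t-1\right\vert =1-t$ for $t\in \lbrack \tfrac{1}{2},1]$, gives
\begin{equation*}
\left\vert \frac{1}{b-a}\int_{a}^{b}f(x)\,dx-f\left( \frac{a+b}{2}\right) \right\vert \leq \left( b-a\right) \left[ \int_{0}^{\frac{1}{2}}t\,g(t)\,dt+\int_{\frac{1}{2}}^{1}(1-t)\,g(t)\,dt\right] ,
\end{equation*}
where $g(t):=\left\vert f^{\prime }(ta+(1-t)b)\right\vert $.

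The next step is to insert the convexity estimate. Since $ta+(1-t)b$ is a convex combination of the endpoints $a,b$ for every $t\in \lbrack 0,1]$, convexity of $\left\vert f^{\prime }\right\vert $ on $\left[ a,b\right] $ yields $g(t)\leq t\left\vert f^{\prime }(a)\right\vert +(1-t)\left\vert f^{\prime }(b)\right\vert $. Substituting this into the two integrals above reduces the problem to evaluating four elementary polynomial integrals, namely $\int_{0}^{1/2}t^{2}\,dt=\tfrac{1}{24}$, $\int_{0}^{1/2}t(1-t)\,dt=\tfrac{1}{12}$, $\int_{1/2}^{1}t(1-t)\,dt=\tfrac{1}{12}$, and $\int_{1/2}^{1}(1-t)^{2}\,dt=\tfrac{1}{24}$.

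Collecting terms, the coefficient of $\left\vert f^{\prime }(a)\right\vert $ is $\tfrac{1}{24}+\tfrac{1}{12}=\tfrac{1}{8}$ and, by the symmetry of the two integrals under $t\mapsto 1-t$, so is the coefficient of $\left\vert f^{\prime }(b)\right\vert $. Hence the bracket equals $\tfrac{1}{8}\left( \left\vert f^{\prime }(a)\right\vert +\left\vert f^{\prime }(b)\right\vert \right) $, and multiplying by $b-a$ and writing $\tfrac{1}{8}=\tfrac{1}{4}\cdot \tfrac{1}{2}$ produces precisely (\ref{H1}).

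I do not expect a genuine obstacle in this argument; the two points requiring care are the correct handling of the sign of $t-1$ on $[\tfrac{1}{2},1]$ before the triangle inequality is applied, and the bookkeeping of the four polynomial integrals together with their coefficients. One could equally run the estimate after the substitution $u=ta+(1-t)b$ directly on $\left[ a,b\right] $, but the $t$-parametrization inherited from Lemma \ref{l1} keeps the computation shortest.
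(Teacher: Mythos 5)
Your argument is correct and is essentially the method the paper itself uses: Theorem \ref{t1} is quoted from \cite{USK} without proof, but the paper's proof of its $\varphi$-convex generalization (Theorem \ref{tt4}, via identity (\ref{7}), the analogue of (\ref{HH})) follows exactly your route --- triangle inequality, the convexity bound on $\left\vert f^{\prime }\right\vert$, and the same four polynomial integrals summing to the coefficient $\tfrac{1}{8}$. Your bookkeeping (including the weights $t$ on $a$ and $1-t$ on $b$ for the parametrization $ta+(1-t)b$, and the symmetry under $t\mapsto 1-t$) checks out.
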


In \cite{SSDRPA}, Dragomir and Agarwal established the following results
connected with the right part of (\ref{H}) as well as to apply them for some
elementary inequalities for real numbers and numerical integration:

\begin{theorem}
Let $f:I^{\circ }\subset \mathbb{R}\rightarrow \mathbb{R}$ be a
differentiable mapping on $I^{\circ }$, $a,b\in I^{\circ }$ with $a<b,$ and $%
f^{\prime }\in L(a,b).$ If the mapping $\left\vert f^{\prime }\right\vert $
is convex on $\left[ a,b\right] $, then the following inequality holds:%
\begin{equation}
\left\vert \dfrac{f(a)+f(b)}{2}-\dfrac{1}{b-a}\int_{a}^{b}f(x)dx\right\vert
\leq \left( b-a\right) \left( \frac{\left\vert f^{\prime }(a)\right\vert
+\left\vert f^{\prime }(b)\right\vert }{8}\right) .  \label{1H}
\end{equation}
\end{theorem}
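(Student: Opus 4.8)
The plan is to establish an integral identity analogous to Lemma~\ref{l1} but adapted to the \emph{right} endpoint of \eqref{H}, and then estimate it using the convexity of $\left\vert f^{\prime}\right\vert$. First I would write, via the substitution $x=ta+(1-t)b$, the representation
\begin{equation*}
\frac{f(a)+f(b)}{2}-\frac{1}{b-a}\int_{a}^{b}f(x)\,dx=(b-a)\int_{0}^{1}\left(\tfrac{1}{2}-t\right)f^{\prime}(ta+(1-t)b)\,dt,
\end{equation*}
which is checked by a single integration by parts: with $u=\tfrac12-t$ and $dv=f^{\prime}(ta+(1-t)b)\,dt$ one has $v=-\tfrac{1}{b-a}f(ta+(1-t)b)$, and the boundary terms reproduce $\tfrac{f(a)+f(b)}{2(b-a)}$ while the remaining integral reproduces the average of $f$. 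This identity is the analogue for the right-hand side of what Lemma~\ref{l1} does for the left-hand side.

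Next I would take absolute values and pull the modulus inside the integral, obtaining
\begin{equation*}
\left\vert\frac{f(a)+f(b)}{2}-\frac{1}{b-a}\int_{a}^{b}f(x)\,dx\right\vert\le(b-a)\int_{0}^{1}\left\vert\tfrac{1}{2}-t\right\vert\,\bigl\vert f^{\prime}(ta+(1-t)b)\bigr\vert\,dt.
\end{equation*}
Now apply the convexity of $\left\vert f^{\prime}\right\vert$ in the form $\left\vert f^{\prime}(ta+(1-t)b)\right\vert\le t\left\vert f^{\prime}(a)\right\vert+(1-t)\left\vert f^{\prime}(b)\right\vert$, so the bound splits into the two elementary integrals $\int_{0}^{1}\left\vert\tfrac12-t\right\vert t\,dt$ and $\int_{0}^{1}\left\vert\tfrac12-t\right\vert(1-t)\,dt$. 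By the symmetry $t\leftrightarrow 1-t$ of the weight $\left\vert\tfrac12-t\right\vert$, both integrals equal $\tfrac18$, and collecting the factors yields exactly the right-hand side $(b-a)\bigl(\tfrac{\left\vert f^{\prime}(a)\right\vert+\left\vert f^{\prime}(b)\right\vert}{8}\bigr)$ of \eqref{1H}.

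I do not expect any serious obstacle here; the only point requiring a little care is the verification of the integral identity (getting the boundary terms and signs right in the integration by parts, and confirming that the substitution maps $[0,1]$ onto $[a,b]$ with the correct orientation). Once the identity is in hand, the rest is the triangle inequality, the convexity estimate, and two routine evaluations of $\int_{0}^{1}\left\vert\tfrac12-t\right\vert t\,dt=\tfrac18$. An alternative to the direct integration by parts is to derive the identity from Lemma~\ref{l1} by combining \eqref{HH} with the corresponding midpoint-to-endpoint identity, but the self-contained computation above is cleaner.
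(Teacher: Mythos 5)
Your proof is correct and follows essentially the same route the paper takes: the trapezoid-type integration-by-parts identity with kernel $\tfrac{1}{2}-t$ is, up to the reparametrization $t\mapsto 1-t$ and a factor of $2$, exactly the identity \eqref{4} that the paper uses (with $\varphi=0$) in the proof of Theorem \ref{tt2}, and the subsequent convexity estimate and evaluation $\int_{0}^{1}\left\vert \tfrac{1}{2}-t\right\vert t\,dt=\tfrac{1}{8}$ match the paper's computation of $\int_{0}^{1}\left\vert 1-2t\right\vert \left[ (1-t)\left\vert f^{\prime }(a)\right\vert +t\left\vert f^{\prime }(b)\right\vert \right] dt$. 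No gaps; the argument is complete.
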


In \cite{CEMPJP}, Pearce and Pe\v{c}ari\'{c} proved the following theorem:

\begin{theorem}
Let $f:I\subset \mathbb{R}\rightarrow \mathbb{R}$, be a differentiable
mapping on $I^{\circ }$, $a,b\in I^{\circ }$ with $a<b$. If the mapping $%
\left\vert f^{\prime }\right\vert ^{q}$ is convex on $\left[ a,b\right] $
for some $q\geq 1$, then%
\begin{equation}
\left\vert \dfrac{f(a)+f(b)}{2}-\dfrac{1}{b-a}\int_{a}^{b}f(x)dx\right\vert
\leq \frac{b-a}{4}\left( \frac{\left\vert f^{\prime }(a)\right\vert
^{q}+\left\vert f^{\prime }(b)\right\vert ^{q}}{2}\right) ^{\frac{1}{q}}
\label{2H}
\end{equation}%
and%
\begin{equation}
\left\vert \frac{1}{b-a}\int_{a}^{b}f(x)dx-f\left( \frac{a+b}{2}\right)
\right\vert \leq \frac{b-a}{4}\left( \frac{\left\vert f^{\prime
}(a)\right\vert ^{q}+\left\vert f^{\prime }(b)\right\vert ^{q}}{2}\right) ^{%
\frac{1}{q}}.  \label{3H}
\end{equation}
\end{theorem}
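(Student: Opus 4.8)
The plan is to establish \eqref{2H} and \eqref{3H} by one and the same template: write the quantity on the left as a weighted integral of $f^{\prime }$ along the segment from $a$ to $b$, estimate it by the power-mean (weighted H\"{o}lder) inequality, insert the convexity of $\abs{f^{\prime }}^{q}$, and evaluate the few elementary integrals that appear. Note first that convexity of $\abs{f^{\prime }}^{q}$ on the closed interval $[a,b]$ forces $f^{\prime }$ to be bounded, hence integrable, on $[a,b]$, so every integral below is legitimate.

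For \eqref{2H} I would start from the well-known identity
\[
\frac{f(a)+f(b)}{2}-\frac{1}{b-a}\int_{a}^{b}f(x)\,dx=\frac{b-a}{2}\int_{0}^{1}(1-2t)\,f^{\prime }(ta+(1-t)b)\,dt ,
\]
obtained by integration by parts together with the change of variables $x=ta+(1-t)b$. Passing to absolute values and applying, for $q\ge 1$, the power-mean inequality with weight $\abs{1-2t}$,
\[
\int_{0}^{1}\abs{1-2t}\,\abs{f^{\prime }(ta+(1-t)b)}\,dt\le \left( \int_{0}^{1}\abs{1-2t}\,dt\right) ^{1-\frac{1}{q}}\left( \int_{0}^{1}\abs{1-2t}\,\abs{f^{\prime }(ta+(1-t)b)}^{q}\,dt\right) ^{\frac{1}{q}}.
\]
I would then use the convexity bound $\abs{f^{\prime }(ta+(1-t)b)}^{q}\le t\abs{f^{\prime }(a)}^{q}+(1-t)\abs{f^{\prime }(b)}^{q}$ together with the elementary evaluations $\int_{0}^{1}\abs{1-2t}\,dt=\tfrac{1}{2}$ and $\int_{0}^{1}t\abs{1-2t}\,dt=\int_{0}^{1}(1-t)\abs{1-2t}\,dt=\tfrac{1}{4}$; collecting the constants yields exactly \eqref{2H}.

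For \eqref{3H} I would start instead from Lemma~\ref{l1}, rewriting its right-hand side as $(b-a)\int_{0}^{1}p(t)\,f^{\prime }(ta+(1-t)b)\,dt$, where $p(t)=t$ on $[0,\tfrac{1}{2}]$ and $p(t)=t-1$ on $[\tfrac{1}{2},1]$, so that $\abs{p(t)}=\min\{t,1-t\}$. The identical chain of steps --- absolute values, the power-mean inequality with weight $\abs{p(t)}$, the same convexity bound on $\abs{f^{\prime }}^{q}$, and the elementary evaluations $\int_{0}^{1}\abs{p(t)}\,dt=\tfrac{1}{4}$ and $\int_{0}^{1}t\abs{p(t)}\,dt=\int_{0}^{1}(1-t)\abs{p(t)}\,dt=\tfrac{1}{8}$ --- produces \eqref{3H}.

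I do not anticipate a real obstacle here; once the two identities are in hand the argument is routine. The delicate point is merely the bookkeeping of the multiplicative constants: one has to verify that in each case all the powers of $2$ --- those coming from $\int_{0}^{1}(\text{weight})\,dt$, from the $\tfrac{1}{q}$-th root of the elementary integral against $t$, from the prefactor ($\tfrac{b-a}{2}$ for \eqref{2H}, $b-a$ for \eqref{3H}), and from the $2^{1/q}$ pulled out of $\bigl(\abs{f^{\prime }(a)}^{q}+\abs{f^{\prime }(b)}^{q}\bigr)^{1/q}$ to form the mean $\bigl(\tfrac{\abs{f^{\prime }(a)}^{q}+\abs{f^{\prime }(b)}^{q}}{2}\bigr)^{1/q}$ --- combine into the single $q$-independent factor $\tfrac{b-a}{4}$. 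The value $q=1$ needs no separate treatment: the power-mean step is then an equality, and the estimate reduces to a bare application of the convexity of $\abs{f^{\prime }}$.
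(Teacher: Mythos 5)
Your proof is correct, and all the constants check out: in \eqref{2H} the factor $\frac{b-a}{2}\cdot(\tfrac12)^{1-1/q}\cdot(\tfrac14)^{1/q}\cdot 2^{1/q}=\frac{b-a}{4}$, and in \eqref{3H} the factor $(b-a)\cdot(\tfrac14)^{1-1/q}\cdot(\tfrac18)^{1/q}\cdot 2^{1/q}=\frac{b-a}{4}$, as you claim. Note, however, that the paper offers no proof of this statement to compare against: it is quoted from Pearce and Pe\v{c}ari\'{c} \cite{CEMPJP} as background. Your method (integral identity, then the weighted power-mean form of H\"{o}lder, then convexity of $\abs{f^{\prime}}^{q}$) is the standard one and is exactly the template the authors use for their own $\varphi$-convex analogues in Theorems \ref{tt2}--\ref{tt5}; the one genuine difference is that for the midpoint inequality the paper's analogous arguments (Theorems \ref{tt5} and \ref{z}) split the integral at $t=\tfrac12$ and apply H\"{o}lder separately on each half, which yields a two-term bound, whereas your single application of the power-mean inequality with the weight $\min\{t,1-t\}$ over all of $[0,1]$ is what produces the cleaner one-term constant $\frac{b-a}{4}$ in \eqref{3H}.
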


We recall that the notion of quasi-convex functions generalizes the notion
of convex functions. More precisely, a function $f:[a,b]\subset \mathbb{R}%
\rightarrow \mathbb{R}$ is said quasi-convex on $[a,b]$ if
\begin{equation*}
f(tx+(1-t)y)\leq \sup \left\{ f(x),f(y)\right\}
\end{equation*}%
for all $x,y\in \lbrack a,b]$ and $t\in \left[ 0,1\right] .$ Clearly, any
convex function is a quasi-convex function. Furthermore, there exist
quasi-convex functions which are not convex (see \cite{ion}).

The classical Hermite-Hadamard inequality provides estimates of the mean
value of a continuous convex function $f:[a,b]\rightarrow \mathbb{R}$. Ion
in \cite{ion} presented some estimates of the right hand side of a Hermite-
Hadamard type inequality in which some quasi-convex functions are involved.
The main results of \cite{ion} are given by the following theorems.

\begin{theorem}
Assume $a,b\in \mathbb{R}$ with $a<b$ and $f:[a,b]\rightarrow \mathbb{R}$ is
a differentiable function on $(a,b)$. If $\left\vert f^{\prime }\right\vert $
is quasi-convex on $[a,b],$ then the following inequality holds%
\begin{equation*}
\left\vert \dfrac{f(a)+f(b)}{2}-\dfrac{1}{b-a}\int_{a}^{b}f(x)dx\right\vert
\leq \frac{\left( b-a\right) }{4}\sup \left\{ \left\vert f^{\prime
}(a)\right\vert ,\left\vert f^{\prime }(b)\right\vert \right\} .
\end{equation*}
\end{theorem}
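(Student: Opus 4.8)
The plan is to start from the integral identity in Lemma \ref{l1}, which expresses the quantity $\frac{1}{b-a}\int_a^b f(x)\,dx - f\!\left(\frac{a+b}{2}\right)$ as
\[
(b-a)\left[\int_0^{1/2} t f'(ta+(1-t)b)\,dt + \int_{1/2}^1 (t-1) f'(ta+(1-t)b)\,dt\right].
\]
Wait — actually the theorem to be proved concerns $\left|\frac{f(a)+f(b)}{2} - \frac{1}{b-a}\int_a^b f(x)\,dx\right|$, the right-hand side of the Hermite--Hadamard inequality, so the relevant identity is the Dragomir--Agarwal type one,
\[
\frac{f(a)+f(b)}{2} - \frac{1}{b-a}\int_a^b f(x)\,dx = \frac{b-a}{2}\int_0^1 (1-2t)\, f'(ta+(1-t)b)\,dt,
\]
which is standard and can be obtained by integration by parts. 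The first step, then, is to record this identity (or cite it from the relevant reference) and take absolute values, moving them inside the integral to get
\[
\left|\frac{f(a)+f(b)}{2} - \frac{1}{b-a}\int_a^b f(x)\,dx\right| \le \frac{b-a}{2}\int_0^1 |1-2t|\,\bigl|f'(ta+(1-t)b)\bigr|\,dt.
\]

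The second step is to invoke quasi-convexity of $|f'|$: for each $t\in[0,1]$, the point $ta+(1-t)b$ lies in $[a,b]$, so $|f'(ta+(1-t)b)| \le \sup\{|f'(a)|,|f'(b)|\}$. This bound is uniform in $t$, so it pulls straight out of the integral, leaving
\[
\left|\frac{f(a)+f(b)}{2} - \frac{1}{b-a}\int_a^b f(x)\,dx\right| \le \frac{b-a}{2}\,\sup\{|f'(a)|,|f'(b)|\}\int_0^1 |1-2t|\,dt.
\]
The third and final step is the elementary computation $\int_0^1 |1-2t|\,dt = 2\int_0^{1/2}(1-2t)\,dt = \tfrac14 + \tfrac14 = \tfrac12$, wait, let me recompute: $\int_0^{1/2}(1-2t)\,dt = [t - t^2]_0^{1/2} = \tfrac12 - \tfrac14 = \tfrac14$, and by symmetry the total is $\tfrac12$. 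Multiplying, $\frac{b-a}{2}\cdot\frac12 = \frac{b-a}{4}$, which gives exactly the claimed bound.

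Honestly, I do not expect any real obstacle here — the argument is a textbook "identity, then triangle inequality, then uniform bound from quasi-convexity, then evaluate a trivial integral" pattern. The only point requiring a little care is making sure the correct integral identity is used for the right-hand-side quantity (as opposed to the midpoint quantity of Lemma \ref{l1}), and being slightly careful that quasi-convexity gives a $\sup$ rather than an average; unlike in the convex case one cannot split $|f'(ta+(1-t)b)|\le t|f'(a)|+(1-t)|f'(b)|$, so the weight $|1-2t|$ is integrated against the constant $\sup\{|f'(a)|,|f'(b)|\}$ rather than against $t$ and $1-t$ separately. Everything else is routine.
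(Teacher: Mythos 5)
Your proof is correct, and it is essentially the same argument the paper uses: the statement itself is only quoted from Ion's paper \cite{ion} without proof, but the paper's own proof of its quasi-$\varphi$-convex generalization in Section 3 is exactly your argument with $\varphi=0$, namely the integration-by-parts identity (\ref{4}), the triangle inequality, the uniform bound $\left\vert f^{\prime}(ta+(1-t)b)\right\vert \leq \sup\{\left\vert f^{\prime}(a)\right\vert,\left\vert f^{\prime}(b)\right\vert\}$ from quasi-convexity, and $\int_{0}^{1}\left\vert 1-2t\right\vert dt=\tfrac{1}{2}$. Your remark that quasi-convexity forces a $\sup$ pulled out of the integral rather than the weighted split used in the convex case is exactly the right point of care.
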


\begin{theorem}
Assume $a,b\in \mathbb{R}$ with $a<b$ and $f:[a,b]\rightarrow \mathbb{R}$ is
a differentiable function on $(a,b)$. Assume $p\in
\mathbb{R}
$ with $p>1$. If $\left\vert f^{\prime }\right\vert ^{p/p-1}$ is
quasi-convex on $[a,b],$ then the following inequality holds%
\begin{equation*}
\left\vert \dfrac{f(a)+f(b)}{2}-\dfrac{1}{b-a}\int_{a}^{b}f(x)dx\right\vert
\leq \frac{\left( b-a\right) }{2(p+1)^{\frac{1}{p}}}\left[ \sup \left\{
\left\vert f^{\prime }(a)\right\vert ^{p/p-1},\left\vert f^{\prime
}(b)\right\vert ^{p/p-1}\right\} \right] ^{\frac{p-1}{p}}.
\end{equation*}
\end{theorem}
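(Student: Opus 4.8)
The plan is to start from the Dragomir--Agarwal integral representation
\begin{equation*}
\frac{f(a)+f(b)}{2}-\frac{1}{b-a}\int_{a}^{b}f(x)\,dx=\frac{b-a}{2}\int_{0}^{1}(1-2t)\,f^{\prime }(ta+(1-t)b)\,dt ,
\end{equation*}
which follows from a single integration by parts together with the substitution $x=ta+(1-t)b$, exactly as in \cite{SSDRPA}. Passing the absolute value inside the integral gives
\begin{equation*}
\left\vert \frac{f(a)+f(b)}{2}-\frac{1}{b-a}\int_{a}^{b}f(x)\,dx\right\vert \leq \frac{b-a}{2}\int_{0}^{1}\left\vert 1-2t\right\vert \,\left\vert f^{\prime }(ta+(1-t)b)\right\vert \,dt .
\end{equation*}

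Next I would apply H\"older's inequality to the right-hand integral with the conjugate exponents $p$ and $q:=\frac{p}{p-1}$, so that $\frac1p+\frac1q=1$, obtaining the product of $\left( \int_{0}^{1}\left\vert 1-2t\right\vert ^{p}\,dt\right) ^{1/p}$ and $\left( \int_{0}^{1}\left\vert f^{\prime }(ta+(1-t)b)\right\vert ^{q}\,dt\right) ^{1/q}$. The first factor is an elementary integral: by the symmetry of $\left\vert 1-2t\right\vert$ about $t=\tfrac12$ one gets $\int_{0}^{1}\left\vert 1-2t\right\vert ^{p}\,dt=\tfrac{1}{p+1}$, so this factor equals $(p+1)^{-1/p}$.

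For the second factor I would invoke the quasi-convexity of $\left\vert f^{\prime }\right\vert ^{q}$: since $ta+(1-t)b$ is a convex combination of $a$ and $b$, the defining inequality yields $\left\vert f^{\prime }(ta+(1-t)b)\right\vert ^{q}\leq \sup \left\{ \left\vert f^{\prime }(a)\right\vert ^{q},\left\vert f^{\prime }(b)\right\vert ^{q}\right\}$ for every $t\in[0,1]$; integrating this constant bound over $[0,1]$ and raising to the power $\tfrac1q=\tfrac{p-1}{p}$ leaves the factor $\left[ \sup \left\{ \left\vert f^{\prime }(a)\right\vert ^{q},\left\vert f^{\prime }(b)\right\vert ^{q}\right\} \right] ^{(p-1)/p}$. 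Combining the three displayed bounds gives exactly the claimed estimate. I do not anticipate a real obstacle: the proof is a routine chaining of the representation lemma, H\"older's inequality, and the quasi-convexity bound, and the only points needing mild care are the correct bookkeeping of the conjugate exponents and the evaluation $\int_{0}^{1}\left\vert 1-2t\right\vert ^{p}\,dt=\tfrac{1}{p+1}$.
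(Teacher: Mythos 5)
Your proposal is correct: the representation $\frac{f(a)+f(b)}{2}-\frac{1}{b-a}\int_a^b f(x)\,dx=\frac{b-a}{2}\int_0^1(1-2t)f'(ta+(1-t)b)\,dt$, H\"older's inequality with conjugate exponents $p$ and $\frac{p}{p-1}$, the evaluation $\int_0^1\lvert 1-2t\rvert^p\,dt=\frac{1}{p+1}$, and the pointwise quasi-convexity bound combine exactly as you describe. This is essentially the same route the paper takes for its quasi-$\varphi$-convex generalization (identity (\ref{4}) followed by H\"older and the $\sup$ bound), which reduces to your argument upon setting $\varphi=0$; the quoted theorem itself is only cited from \cite{ion}, not reproved.
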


Convexity plays a central and fundamental role in mathematical finance,
economics, engineering, management sciences and optimizastion theory. In
recent years, several extensions and generalizations have been considered
for classical convexity. A significant generalization of convex functions is
that of $\varphi $-convex functions introduced by Noor in \cite{Noor1}. In
\cite{Noor1} and \cite{Noor5}, the authors have studied the basic properties
of the $\varphi $-convex functions. It is well-know that the $\varphi $%
-convex functions and $\varphi $-sets may not be convex functions and convex
sets. This class of nonconvex functions include the classical convex
functions and its various classes as special cases. For some recent results
related to this nonconvex functions, see the papers \cite{Noor1}-\cite{Noor5}

\section{Preliminaries}

Let $f,\varphi :K\rightarrow
\mathbb{R}
^{n}$, where $K$ is a nonempty closed set in $%
\mathbb{R}
^{n}$, be continuous functions. First of all, we recall the following well
know results and concepts, which are mainly due to Noor and Noor \cite{Noor1}%
\ and Noor \cite{Noor5} as follows:

\begin{definition}
\label{d1} Let $u,v\in K$. Then the set $K$ is said to be $\varphi -convex$
at $u$ with respect to $\varphi $, if%
\begin{equation*}
u+te^{i\varphi }\left( v-u\right) \in K,\text{ }\forall u,v\in K,\text{ }%
t\in \left[ 0,1\right] .
\end{equation*}

\begin{remark}
\label{r1} We would like to mention that the Definition \ref{d1} of a $%
\varphi -convex$ set has a clear geometric interpretation. This definition
essentially says that there is a path starting from a point $u$ which is
contained in $K$. We do not require that the point $v$ should be one of the
end points of the path. This observation plays an important role in our
analysis. Note that, if we demand that $v$ should be an end point of the
path for every pair of points, $u,v\in K$, then $e^{i\varphi }\left(
v-u\right) =v-u$ if and only if, $\varphi =0$, and consequently $\varphi
-convexity$ reduces to convexity. Thus, it is true that every convex set is
also an $\varphi -convex$ set, but the converse is not necessarily true, see
\cite{Noor1},\cite{Noor5} and the references therein.
\end{remark}
\end{definition}

\begin{definition}
\label{d2} The function $f$ on the $\varphi -convex$ set $K$ is said to be $%
\varphi -convex$ with respect to $\varphi $, if%
\begin{equation*}
f\left( u+te^{i\varphi }\left( v-u\right) \right) \leq \left( 1-t\right)
f\left( u\right) +tf\left( v\right) ,\text{ }\forall u,v\in K,\text{ }t\in %
\left[ 0,1\right] .
\end{equation*}%
The function $f$ is said to be $\varphi -concave$ if and only if $-f$ is $%
\varphi -convex$. Note that every convex function is a $\varphi -convex$
function, but the converse is not true. For eample, the function $f:\mathbb{%
R\rightarrow }\mathbb{R}$, $f(x)=-\left\vert x\right\vert $ is not a convex
function$,$ but $f(x)=-\left\vert x\right\vert $ is a $\varphi -convex$ with
respect to $\varphi $ where%
\begin{equation*}
\varphi (v,u)=\left\{
\begin{array}{c}
2k\pi ,\text{ \ \ \ \ \ }u.v\geq 0,\ k\in \mathbb{Z} \\
k\pi ,\text{ \ \ \ \ \ \ }u.v<0,\ k\in \mathbb{Z}\text{.}%
\end{array}%
\right.
\end{equation*}
\end{definition}

\begin{definition}
\label{d3} The function $f$ on the $\varphi -convex$ set $K$ is said to be
logarithmic $\varphi -convex$ with respect to $\varphi $, such that%
\begin{equation*}
f\left( u+te^{i\varphi }\left( v-u\right) \right) \leq \left( f\left(
u\right) \right) ^{1-t}\left( f\left( v\right) \right) ^{t},\text{ }u,v\in K,%
\text{ }t\in \left[ 0,1\right]
\end{equation*}%
where $f\left( .\right) >0$.
\end{definition}

Now we define a new definition for quasi-$\varphi -convex$ functions as
follows:

\begin{definition}
\label{d4} The function $f$ on the quasi $\varphi -convex$ set $K$ is said
to be quasi $\varphi -convex$ with respect to $\varphi $, if%
\begin{equation*}
f\left( u+te^{i\varphi }\left( v-u\right) \right) \leq \max \left\{ f\left(
u\right) ,f\left( v\right) \right\} .
\end{equation*}
\end{definition}

From the above definitions, we have%
\begin{eqnarray*}
f\left( u+te^{i\varphi }\left( v-u\right) \right) &\leq &\left( f\left(
u\right) \right) ^{1-t}\left( f\left( v\right) \right) ^{t} \\
&\leq &\left( 1-t\right) f\left( u\right) +tf\left( v\right) \\
&\leq &\max \left\{ f\left( u\right) ,f\left( v\right) \right\} .
\end{eqnarray*}%
In \cite{Noor3},\ Noor proved the Hermite-Hadamard inequality for the $%
\varphi -$convex functions as follows:

\begin{theorem}
\label{tt1} Let $f:K=\left[ a,a+e^{i\varphi }\left( b-a\right) \right]
\rightarrow \left( 0,\infty \right) $ be a $\varphi -convex$\ function on
the interval of real numbers $K^{0}$ (the interior of $K$) and $a,b\in K^{0}$
with $a<a+e^{i\varphi }\left( b-a\right) $ and $0\leq \varphi \leq \frac{\pi
}{2}$. Then the following inequality holds:%
\begin{eqnarray}
f\left( \frac{2a+e^{i\varphi }\left( b-a\right) }{2}\right) &\leq &\frac{1}{%
e^{i\varphi }\left( b-a\right) }\int\limits_{a}^{a+e^{i\varphi }\left(
b-a\right) }f\left( x\right) dx  \label{2} \\
&\leq &\frac{f\left( a\right) +f\left( a+e^{i\varphi }\left( b-a\right)
\right) }{2}\leq \frac{f\left( a\right) +f\left( b\right) }{2}.  \notag
\end{eqnarray}
\end{theorem}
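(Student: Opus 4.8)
The plan is to imitate the classical proof of the Hermite--Hadamard inequality, transporting everything onto the parameter interval $[0,1]$ via the substitution $x=a+te^{i\varphi }(b-a)$, $dx=e^{i\varphi }(b-a)\,dt$, which converts the normalized integral into
\[
\frac{1}{e^{i\varphi }(b-a)}\int_{a}^{a+e^{i\varphi }(b-a)}f(x)\,dx=\int_{0}^{1}f\bigl(a+te^{i\varphi }(b-a)\bigr)\,dt .
\]
Each of the three estimates in \eqref{2} then reduces to a pointwise inequality in $t$ coming from the $\varphi $-convexity of $f$ (Definition \ref{d2}), used together with the observation in Remark \ref{r1} that along the $\varphi $-path issuing from $a$ in the direction of $b$ the natural comparison values are those at the path's endpoints $a$ and $a+e^{i\varphi }(b-a)$.

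For the middle term, $\varphi $-convexity along that path gives $f\bigl(a+te^{i\varphi }(b-a)\bigr)\le (1-t)f(a)+t\,f\bigl(a+e^{i\varphi }(b-a)\bigr)$ for $t\in[0,1]$; integrating over $t$ yields the desired bound $\tfrac12\bigl[f(a)+f(a+e^{i\varphi }(b-a))\bigr]$. The last inequality $f\bigl(a+e^{i\varphi }(b-a)\bigr)\le f(b)$ is simply the instance $t=1$ of Definition \ref{d2} with $u=a$, $v=b$. For the left-hand inequality I would write the midpoint as $\frac{2a+e^{i\varphi }(b-a)}{2}=a+\tfrac12 e^{i\varphi }(b-a)$ and realize it as the $t=\tfrac12$ point of the $\varphi $-path joining the symmetric pair $a+te^{i\varphi }(b-a)$ and $a+(1-t)e^{i\varphi }(b-a)$, both in $K$ for $t\in[0,1]$, so that $\varphi $-convexity yields
\[
f\!\left(\frac{2a+e^{i\varphi }(b-a)}{2}\right)\le \tfrac12 f\bigl(a+te^{i\varphi }(b-a)\bigr)+\tfrac12 f\bigl(a+(1-t)e^{i\varphi }(b-a)\bigr);
\]
integrating in $t\in[0,1]$ and invoking the substitution above (each summand integrates to the normalized integral) gives the left-hand estimate.

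The integrations are routine; the step that requires care is the bookkeeping with the complex factor $e^{i\varphi }$ — one has to check that the auxiliary points feeding each use of Definition \ref{d2} actually lie in the $\varphi $-convex set $K$ and that the direction appearing in the definition is the one consistent with the path running from $a$ to $a+e^{i\varphi }(b-a)$, and the hypothesis $0\le \varphi \le \tfrac{\pi }{2}$ is what keeps the limits of integration in \eqref{2} correctly oriented. (Positivity of $f$ plays no role in this particular chain.) I expect this interpretive bookkeeping, rather than any calculation, to be the only genuine obstacle.
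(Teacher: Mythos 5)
First, note that the paper itself gives no real proof of Theorem \ref{tt1}: it attributes the result to Noor \cite{Noor3} and adds only the one-line remark that it follows from the definition of $\varphi$-convexity together with $f(a+e^{i\varphi}(b-a))<f(b)$. The parts of your argument that match that remark are fine: integrating Definition \ref{d2} with $u=a$, $v=b$ gives $\int_0^1 f(a+te^{i\varphi}(b-a))\,dt\le\frac{f(a)+f(b)}{2}$, and the case $t=1$ gives $f(a+e^{i\varphi}(b-a))\le f(b)$, which yields the last inequality of the chain.

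However, the two remaining links --- exactly the steps you defer to ``bookkeeping with the complex factor $e^{i\varphi}$'' --- do not follow from Definition \ref{d2}, and this is a genuine gap, not bookkeeping. For the middle inequality you need the pointwise bound $f(a+te^{i\varphi}(b-a))\le(1-t)f(a)+t\,f(a+e^{i\varphi}(b-a))$; but applying Definition \ref{d2} with $u=a$ and $v=a+e^{i\varphi}(b-a)$ controls $f$ at the point $u+te^{i\varphi}(v-u)=a+te^{2i\varphi}(b-a)$, not at $a+te^{i\varphi}(b-a)$. Likewise, for the left-hand inequality, taking $u=a+te^{i\varphi}(b-a)$ and $v=a+(1-t)e^{i\varphi}(b-a)$, the point the definition controls at parameter $\tfrac12$ is $u+\tfrac12 e^{i\varphi}(v-u)=a+te^{i\varphi}(b-a)+\tfrac{1-2t}{2}e^{2i\varphi}(b-a)$, which coincides with the arithmetic midpoint $a+\tfrac12 e^{i\varphi}(b-a)$ only when $e^{2i\varphi}=e^{i\varphi}$, i.e.\ when $\varphi=0$. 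So the two pointwise inequalities you invoke are not instances of Definition \ref{d2}; closing the argument requires an additional structural hypothesis on the map $(v,u)\mapsto e^{i\varphi}(v-u)$ (an analogue of Noor's Condition C for preinvex functions, ensuring that the $\varphi$-path issued from a point of the path lands back on the path with the expected parametrization), which neither your proposal nor the paper supplies.
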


This inequality can easily show that using the $\varphi -convex$\ function's
definition and $f\left( a+e^{i\varphi }\left( b-a\right) \right) <f\left(
b\right) .$

In this article, using functions whose derivatives absolute values are $%
\varphi $-convex and quasi-$\varphi $-convex, we obtained new inequalities
releted to the right and left side of Hermite-Hadamard inequality. In
particular if $\varphi =0$ is taken as, our results obtained reduce to the
Hermite-Hadamard type inequality for classical convex functions.

Throughout this study, we always assume that $K=\left[ a,a+e^{i\varphi }(b-a)%
\right] $ and $0\leq \varphi \leq \frac{\pi }{2}$ the interval, unless
otherwise specified.

We shall start with the following refinements of the Hermite-Hadamard
inequality for $\varphi -$convex functions. Firstly, we give the following
results connected with the right part of (\ref{2}):

\begin{theorem}
\label{tt2} Let $f:K\rightarrow (0,\infty )$ be a differentiable mapping on $%
K^{0}$. If $\left\vert f^{\prime }\right\vert $ is $\varphi -$convex
function on the interval of real numbers $K^{0}$ (the interior of K) and $%
a,b\in K$ with $a<a+e^{i\varphi }(b-a)$. Then, the following inequality
holds:%
\begin{eqnarray}
&&\left\vert \frac{1}{e^{i\varphi }(b-a)}\int_{a}^{a+e^{i\varphi
}(b-a)}f(x)dx-\frac{f(a)+f(a+e^{i\varphi }(b-a))}{2}\right\vert  \notag \\
&&  \label{3} \\
&\leq &\frac{e^{i\varphi }(b-a)}{8}\left[ \left\vert f^{\prime
}(a)\right\vert +\left\vert f^{\prime }(b)\right\vert \right] .  \notag
\end{eqnarray}

\begin{proof}
Since $K$ is $\varphi -$convex with respect to $\varphi $, for every $t\in %
\left[ 0,1\right] $, we have $a+te^{i\theta }(b-a)\in K$. Integrating by
parts implies that%
\begin{eqnarray}
&&\int_{0}^{1}(1-2t)f^{\prime }(a+te^{i\varphi }(b-a))dt  \notag \\
&&  \label{4} \\
&=&\left[ \frac{(1-2t)f(a+te^{i\varphi }(b-a))}{e^{i\varphi }(b-a)}\right]
_{0}^{1}+\frac{2}{e^{i\varphi }(b-a)}\int_{0}^{1}f(a+te^{i\varphi }(b-a))dt
\notag \\
&&  \notag \\
&=&-\frac{f(a)+f(a+e^{i\varphi }(b-a))}{e^{i\varphi }(b-a)}+\frac{2}{%
e^{2i\varphi }(b-a)^{2}}\int_{a}^{a+e^{i\varphi }(b-a)}f(x)dx  \notag
\end{eqnarray}%
By $\varphi -$convexity\ of\ $\left\vert f^{\prime }\right\vert $ and (\ref%
{4}), we have%
\begin{eqnarray*}
&&\left\vert \frac{1}{e^{i\varphi }(b-a)}\int_{a}^{a+e^{i\varphi
}(b-a)}f(x))dx-\frac{f(a)+f(a+e^{i\varphi }(b-a))}{2}\right\vert  \\
&=&\frac{e^{i\varphi }(b-a)}{2}\left\vert \int_{0}^{1}(1-2t)f^{\prime
}(a+te^{i\varphi }(b-a))dt\right\vert  \\
&\leq &\frac{e^{i\varphi }(b-a)}{2}\int_{0}^{1}\left\vert 1-2t\right\vert %
\left[ (1-t)\left\vert f^{\prime }(a)\right\vert +t\left\vert f^{\prime
}(b)\right\vert \right] dt \\
&=&\frac{e^{i\varphi }(b-a)}{8}\left[ \left\vert f^{\prime }(a)\right\vert
+\left\vert f^{\prime }(b)\right\vert \right] .
\end{eqnarray*}%
which completes the proof.
\end{proof}
\end{theorem}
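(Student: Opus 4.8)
The plan is to reduce the left-hand side of \eqref{3} to an integral of the form $\int_0^1(1-2t)f'(a+te^{i\varphi}(b-a))\,dt$ and then estimate that integral using the $\varphi$-convexity of $|f'|$.

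First I would establish the integration-by-parts identity \eqref{4}. Working on the $\varphi$-convex set $K=[a,a+e^{i\varphi}(b-a)]$, the point $a+te^{i\varphi}(b-a)$ lies in $K$ for all $t\in[0,1]$, so the composite function $t\mapsto f(a+te^{i\varphi}(b-a))$ is well-defined and differentiable, with derivative $e^{i\varphi}(b-a)f'(a+te^{i\varphi}(b-a))$. Integrating $\int_0^1(1-2t)f'(a+te^{i\varphi}(b-a))\,dt$ by parts with $u=1-2t$ and $dv=f'(a+te^{i\varphi}(b-a))\,dt$ produces the boundary term $\bigl[(1-2t)f(a+te^{i\varphi}(b-a))/(e^{i\varphi}(b-a))\bigr]_0^1$, which evaluates to $-[f(a)+f(a+e^{i\varphi}(b-a))]/(e^{i\varphi}(b-a))$, plus $(2/(e^{i\varphi}(b-a)))\int_0^1 f(a+te^{i\varphi}(b-a))\,dt$. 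A change of variable $x=a+te^{i\varphi}(b-a)$, $dx=e^{i\varphi}(b-a)\,dt$, rewrites the last integral as $(2/(e^{2i\varphi}(b-a)^2))\int_a^{a+e^{i\varphi}(b-a)}f(x)\,dx$, giving \eqref{4}.

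Next I would solve \eqref{4} for the target quantity: multiplying through by $e^{i\varphi}(b-a)/2$ and rearranging shows that
\[
\frac{1}{e^{i\varphi}(b-a)}\int_a^{a+e^{i\varphi}(b-a)}f(x)\,dx-\frac{f(a)+f(a+e^{i\varphi}(b-a))}{2}=\frac{e^{i\varphi}(b-a)}{2}\int_0^1(1-2t)f'(a+te^{i\varphi}(b-a))\,dt.
\]
Taking absolute values and passing the modulus inside the integral yields the bound $\frac{e^{i\varphi}(b-a)}{2}\int_0^1|1-2t|\,|f'(a+te^{i\varphi}(b-a))|\,dt$. Now the $\varphi$-convexity of $|f'|$ from Definition \ref{d2} gives $|f'(a+te^{i\varphi}(b-a))|\le(1-t)|f'(a)|+t|f'(b)|$. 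Substituting this and using the elementary integrals $\int_0^1|1-2t|(1-t)\,dt=\int_0^1|1-2t|\,t\,dt=\tfrac14$ produces exactly $\frac{e^{i\varphi}(b-a)}{8}\bigl[|f'(a)|+|f'(b)|\bigr]$.

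The main subtlety — more a matter of bookkeeping than a genuine obstacle — is the presence of the complex factor $e^{i\varphi}(b-a)$, which must be carried consistently through the integration by parts and the change of variables, and the implicit convention (inherited from Theorem \ref{tt1} and the standing hypothesis $0\le\varphi\le\pi/2$) under which absolute values and the ordering $a<a+e^{i\varphi}(b-a)$ are interpreted; once that is fixed, the argument is a direct transcription of the classical proof of \eqref{1H} with $b-a$ replaced by $e^{i\varphi}(b-a)$.
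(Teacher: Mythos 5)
Your proposal matches the paper's proof essentially line for line: the same integration-by-parts identity \eqref{4}, the same rearrangement giving the factor $\tfrac{e^{i\varphi}(b-a)}{2}$, the same application of $\varphi$-convexity of $\left\vert f^{\prime }\right\vert$, and the same elementary integrals $\int_{0}^{1}\left\vert 1-2t\right\vert (1-t)\,dt=\int_{0}^{1}\left\vert 1-2t\right\vert t\,dt=\tfrac{1}{4}$. The argument is correct and no different in substance from the one in the paper.
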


\begin{theorem}
\label{tt3}Let $f:K\rightarrow (0,\infty )$ be a differentiable mapping on $%
K^{0}$. Assume $p\in
\mathbb{R}
$ with $p>1$. If $\left\vert f^{\prime }\right\vert ^{p/p-1}$ is $\varphi -$%
convex function on the interval of real numbers $K^{0}$ (the interior of $K$%
) and $a,b\in K$ with $a<a+e^{i\varphi }(b-a)$. Then, the following
inequality holds:%
\begin{eqnarray}
&&\left\vert \frac{1}{e^{i\varphi }(b-a)}\int_{a}^{a+e^{i\varphi
}(b-a)}f(x)dx-\frac{f(a)+f(a+e^{i\varphi }(b-a))}{2}\right\vert  \notag \\
&&  \label{5} \\
&\leq &\frac{e^{i\varphi }(b-a)}{2(p+1)^{\frac{1}{p}}}\left( \frac{%
\left\vert f^{\prime }(a)\right\vert ^{\frac{p}{p-1}}+\left\vert f^{\prime
}(b)\right\vert ^{\frac{p}{p-1}}}{2}\right) ^{\frac{p-1}{p}}  \notag
\end{eqnarray}

\begin{proof}
From H\"{o}lder's inequality and by using (\ref{4}) in the proof of Theorem %
\ref{tt2}, we have%
\begin{eqnarray*}
&&\left\vert \frac{1}{e^{i\varphi }(b-a)}\int_{a}^{a+e^{i\varphi
}(b-a)}f(x))dx-\frac{f(a)+f(a+e^{i\varphi }(b-a))}{2}\right\vert  \\
&\leq &\frac{e^{i\varphi }(b-a)}{2}\int_{0}^{1}\left\vert 1-2t\right\vert
\left\vert f^{\prime }(a+te^{i\varphi }(b-a))\right\vert dt \\
&\leq &\frac{e^{i\varphi }(b-a)}{2}\left( \int_{0}^{1}\left\vert
1-2t\right\vert ^{p}dt\right) ^{\frac{1}{p}}\left( \int_{0}^{1}\left\vert
f^{\prime }(a+te^{i\varphi }(b-a))\right\vert ^{\frac{p}{p-1}}dt\right) ^{%
\frac{p-1}{p}} \\
&\leq &\frac{e^{i\varphi }(b-a)}{2}\left( \int_{0}^{1}\left\vert
1-2t\right\vert ^{p}dt\right) ^{\frac{1}{p}}\left( \int_{0}^{1}\left[
(1-t)\left\vert f^{\prime }(a)\right\vert ^{\frac{p}{p-1}}+t\left\vert
f^{\prime }(b)\right\vert ^{\frac{p}{p-1}}\right] dt\right) ^{\frac{p-1}{p}}
\\
&=&\frac{e^{i\varphi }(b-a)}{2(p+1)^{\frac{1}{p}}}\left( \frac{\left\vert
f^{\prime }(a)\right\vert ^{\frac{p}{p-1}}+\left\vert f^{\prime
}(b)\right\vert ^{\frac{p}{p-1}}}{2}\right) ^{\frac{p-1}{p}}.
\end{eqnarray*}%
This implies ineqyality (\ref{5}).
\end{proof}
\end{theorem}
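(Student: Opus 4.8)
The plan is to run the argument of Theorem~\ref{tt2} almost verbatim, replacing the pointwise estimate of the integrand by an application of H\"older's inequality. First I would invoke the identity \eqref{4} established in the proof of Theorem~\ref{tt2}; rearranged, it reads
\[
\frac{1}{e^{i\varphi}(b-a)}\int_{a}^{a+e^{i\varphi}(b-a)}f(x)\,dx-\frac{f(a)+f(a+e^{i\varphi}(b-a))}{2}=\frac{e^{i\varphi}(b-a)}{2}\int_{0}^{1}(1-2t)\,f'\!\left(a+te^{i\varphi}(b-a)\right)dt .
\]
Because $K$ is $\varphi$-convex with respect to $\varphi$, the point $a+te^{i\varphi}(b-a)$ lies in $K$ for every $t\in[0,1]$, so $f'$ and $|f'|^{p/(p-1)}$ may be evaluated and estimated along this path.

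Next I would take absolute values on both sides and apply H\"older's inequality with exponents $p$ and $p/(p-1)$ to the integral $\int_{0}^{1}|1-2t|\,\bigl|f'(a+te^{i\varphi}(b-a))\bigr|\,dt$, obtaining the bound
\[
\frac{e^{i\varphi}(b-a)}{2}\left(\int_{0}^{1}|1-2t|^{p}\,dt\right)^{\!1/p}\left(\int_{0}^{1}\left|f'\!\left(a+te^{i\varphi}(b-a)\right)\right|^{\frac{p}{p-1}}dt\right)^{\!\frac{p-1}{p}} .
\]
Then the two factors are computed separately. For the first, symmetry about $t=\frac{1}{2}$ together with the substitution $u=1-2t$ gives $\int_{0}^{1}|1-2t|^{p}\,dt=\frac{1}{p+1}$, hence this factor equals $(p+1)^{-1/p}$. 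For the second, $\varphi$-convexity of $|f'|^{p/(p-1)}$ applied with $u=a$ and $v=b$ yields $\bigl|f'(a+te^{i\varphi}(b-a))\bigr|^{p/(p-1)}\le(1-t)|f'(a)|^{p/(p-1)}+t|f'(b)|^{p/(p-1)}$, and integrating this over $[0,1]$ produces $\frac{|f'(a)|^{p/(p-1)}+|f'(b)|^{p/(p-1)}}{2}$. Inserting both evaluations back into the bound above gives exactly \eqref{5}.

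I do not anticipate a real obstacle: once identity \eqref{4} is in hand, the proof is routine. The only points that deserve a line of care are the evaluation $\int_{0}^{1}|1-2t|^{p}\,dt=\frac{1}{p+1}$ (done by the symmetry substitution just described) and the observation that the $\varphi$-convexity estimate is legitimate along the curve $t\mapsto a+te^{i\varphi}(b-a)$, which is guaranteed by the $\varphi$-convexity of $K$. As in Theorem~\ref{tt2}, I would keep the complex factor $e^{i\varphi}(b-a)$ in the estimates (rather than its modulus) for consistency with the surrounding statements.
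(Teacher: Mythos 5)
Your proposal is correct and follows essentially the same route as the paper's own proof: invoke identity \eqref{4}, apply H\"older's inequality with exponents $p$ and $p/(p-1)$, evaluate $\int_{0}^{1}|1-2t|^{p}\,dt=\tfrac{1}{p+1}$, and use the $\varphi$-convexity of $|f'|^{p/(p-1)}$ to bound the second factor by $\bigl(\tfrac{|f'(a)|^{p/(p-1)}+|f'(b)|^{p/(p-1)}}{2}\bigr)^{(p-1)/p}$. No gaps; the paper's argument is identical in structure and in every computation.
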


Now, we give the following results connected with the left part of (\ref{2}):

\begin{theorem}
\label{tt4} Under the assumptions of Theorem \ref{tt2}. Then the following
inequality holds:%
\begin{eqnarray}
&&\left\vert \frac{1}{e^{i\varphi }(b-a)}\int_{a}^{a+e^{i\varphi
}(b-a)}f(x)dx-f\left( \frac{2a+e^{i\varphi }(b-a)}{2}\right) \right\vert
\notag \\
&&  \label{6} \\
&\leq &\frac{e^{i\varphi }(b-a)}{8}\left[ \left\vert f^{\prime
}(a)\right\vert +\left\vert f^{\prime }(b)\right\vert \right] .  \notag
\end{eqnarray}

\begin{proof}
Since $K$ is $\varphi -$convex with respect to $\varphi $, for every $t\in %
\left[ 0,1\right] $, we have $a+te^{i\theta }(b-a)\in K$. Integrating by
parts implies that%
\begin{eqnarray}
&&\int_{0}^{\frac{1}{2}}tf^{\prime }(a+te^{i\varphi }(b-a))dt+\int_{\frac{1}{%
2}}^{1}(t-1)f^{\prime }(a+te^{i\varphi }(b-a))dt  \notag \\
&&  \label{7} \\
&=&\left[ \frac{tf(a+te^{i\varphi }(b-a))}{e^{i\varphi }(b-a)}\right] _{0}^{%
\frac{1}{2}}+\left[ \frac{(t-1)f(a+te^{i\varphi }(b-a))}{e^{i\varphi }(b-a)}%
\right] _{\frac{1}{2}}^{1}  \notag \\
&&-\frac{1}{e^{i\varphi }(b-a)}\int_{0}^{1}f(a+te^{i\varphi }(b-a))dt  \notag
\\
&&  \notag \\
&=&\frac{1}{e^{i\varphi }(b-a)}f\left( \frac{2a+e^{i\varphi }(b-a)}{2}%
\right) -\frac{1}{e^{2i\varphi }(b-a)^{2}}\int_{a}^{a+e^{i\varphi
}(b-a)}f(x)dt.  \notag
\end{eqnarray}%
By $\varphi -$convexity of $\left\vert f^{\prime }\right\vert $, we have%
\begin{eqnarray*}
&&\left\vert \frac{1}{e^{i\varphi }(b-a)}\int_{a}^{a+e^{i\varphi
}(b-a)}f(x)dx-f\left( \frac{2a+e^{i\varphi }(b-a)}{2}\right) \right\vert  \\
&\leq &e^{i\varphi }(b-a)\left[ \int_{0}^{\frac{1}{2}}t\left\vert f^{\prime
}(a+te^{i\varphi }(b-a))\right\vert dt+\int_{\frac{1}{2}}^{1}(1-t)\left\vert
f^{\prime }(a+te^{i\varphi }(b-a))\right\vert dt\right]  \\
&\leq &e^{i\varphi }(b-a)\left[ \int_{0}^{\frac{1}{2}}t\left[
(1-t)\left\vert f^{\prime }(a)\right\vert +t\left\vert f^{\prime
}(b)\right\vert \right] dt+\int_{\frac{1}{2}}^{1}(1-t)\left[ (1-t)\left\vert
f^{\prime }(a)\right\vert +t\left\vert f^{\prime }(b)\right\vert \right] dt%
\right]  \\
&\leq &e^{i\varphi }(b-a)\left[ \frac{\left\vert f^{\prime }(a)\right\vert
+\left\vert f^{\prime }(b)\right\vert }{8}\right] .
\end{eqnarray*}%
The proof is completed.
\end{proof}
\end{theorem}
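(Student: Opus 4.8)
The plan is to establish the $\varphi$-analogue of the midpoint identity in Lemma~\ref{l1}, and then to repeat, on the two subintervals $[0,\tfrac12]$ and $[\tfrac12,1]$, the modulus-and-convexity estimate already used to prove Theorem~\ref{tt2}. Since $K$ is $\varphi$-convex with respect to $\varphi$ (Definition~\ref{d1}), the curve $t\mapsto a+te^{i\varphi}(b-a)$ stays in $K$ for all $t\in[0,1]$, so $g(t):=f(a+te^{i\varphi}(b-a))$ is a well-defined differentiable function with $g'(t)=e^{i\varphi}(b-a)\,f'(a+te^{i\varphi}(b-a))$. Integrating by parts in $\int_0^{1/2}tf'(a+te^{i\varphi}(b-a))\,dt$ (with $u=t$) and in $\int_{1/2}^{1}(t-1)f'(a+te^{i\varphi}(b-a))\,dt$ (with $u=t-1$), the two interior integrals combine into $-\int_0^1 g$, the boundary terms collapse to the single value $g(\tfrac12)=f\!\left(\tfrac{2a+e^{i\varphi}(b-a)}{2}\right)$, and after the substitution $x=a+te^{i\varphi}(b-a)$ one obtains
\[
\int_{0}^{1/2}tf'(a+te^{i\varphi}(b-a))\,dt+\int_{1/2}^{1}(t-1)f'(a+te^{i\varphi}(b-a))\,dt
=\frac{f\!\left(\frac{2a+e^{i\varphi}(b-a)}{2}\right)}{e^{i\varphi}(b-a)}-\frac{1}{e^{2i\varphi}(b-a)^{2}}\int_{a}^{a+e^{i\varphi}(b-a)}f(x)\,dx,
\]
the exact counterpart of (\ref{HH}) in this setting.

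Next I would multiply this identity by $e^{i\varphi}(b-a)$ and rearrange so that the quantity inside the modulus on the left of (\ref{6}) stands alone; taking absolute values and using $|t|=t$ on $[0,\tfrac12]$ and $|t-1|=1-t$ on $[\tfrac12,1]$ bounds it above by
\[
e^{i\varphi}(b-a)\left[\int_{0}^{1/2}t\,|f'(a+te^{i\varphi}(b-a))|\,dt+\int_{1/2}^{1}(1-t)\,|f'(a+te^{i\varphi}(b-a))|\,dt\right].
\]
Now I would invoke the $\varphi$-convexity of $|f'|$ (Definition~\ref{d2} applied to $|f'|$), i.e. $|f'(a+te^{i\varphi}(b-a))|\le(1-t)|f'(a)|+t|f'(b)|$, and evaluate the four elementary integrals that appear: $\int_0^{1/2}t(1-t)\,dt=\int_{1/2}^1 t(1-t)\,dt=\tfrac1{12}$ and $\int_0^{1/2}t^{2}\,dt=\int_{1/2}^1(1-t)^{2}\,dt=\tfrac1{24}$. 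Summing, the coefficient of each of $|f'(a)|$ and $|f'(b)|$ is $\tfrac1{12}+\tfrac1{24}=\tfrac18$, which is exactly the right-hand side of (\ref{6}).

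I do not anticipate a real obstacle here: once the identity is in place, the remainder of the argument is the midpoint version of the proof of Theorem~\ref{tt2}, with the single weight $|1-2t|$ on $[0,1]$ replaced by the pair of weights $t$ and $1-t$ on the two halves. The only matters needing care are bookkeeping: carrying the complex factor $e^{i\varphi}(b-a)$ through consistently --- it is cancelled out of the identity before the modulus estimate, just as in the proofs of Theorems~\ref{tt2} and~\ref{tt3} --- and using the $\varphi$-convexity of $K$ (Definition~\ref{d1}, Remark~\ref{r1}) to ensure $a+te^{i\varphi}(b-a)\in K$ for all $t\in[0,1]$, which is what makes both the integration by parts and the pointwise convexity bound legitimate. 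The standing hypotheses $a<a+e^{i\varphi}(b-a)$ and $0\le\varphi\le\tfrac\pi2$ are what give the orientation of the integral and the stated inequality their meaning.
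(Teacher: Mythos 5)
Your proposal is correct and follows essentially the same route as the paper's own proof: the same integration-by-parts identity on the two subintervals $[0,\tfrac12]$ and $[\tfrac12,1]$, the same modulus estimate after multiplying by $e^{i\varphi}(b-a)$, and the same pointwise $\varphi$-convexity bound on $|f'|$, with the four elementary integrals summing to the coefficient $\tfrac18$ exactly as in the paper. No gaps; your explicit evaluation $\tfrac1{12}+\tfrac1{24}=\tfrac18$ is just a slightly more detailed version of the paper's final step.
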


\begin{theorem}
\label{tt5} Under the assumptions of Theorem \ref{tt3}. Then the following
inequality holds:%
\begin{eqnarray}
&&\left\vert \frac{1}{e^{i\varphi }(b-a)}\int_{a}^{a+e^{i\varphi
}(b-a)}f(x)dx-f\left( \frac{2a+e^{i\varphi }(b-a)}{2}\right) \right\vert
\notag \\
&&  \label{8} \\
&\leq &\frac{e^{i\varphi }(b-a)}{16}\left( \frac{4}{p+1}\right) ^{\frac{1}{p}%
}\left[ \left( 3\left\vert f^{\prime }(a)\right\vert ^{\frac{p}{p-1}%
}+\left\vert f^{\prime }(b)\right\vert ^{\frac{p}{p-1}}\right) ^{\frac{p-1}{p%
}}+\left( \left\vert f^{\prime }(a)\right\vert ^{\frac{p}{p-1}}+3\left\vert
f^{\prime }(b)\right\vert ^{\frac{p}{p-1}}\right) ^{\frac{p-1}{p}}\right] .
\notag
\end{eqnarray}

\begin{proof}
From H\"{o}lder's inequality and by using (\ref{7}), we have%
\begin{eqnarray*}
&&\left\vert \frac{1}{e^{i\varphi }(b-a)}\int_{a}^{a+e^{i\varphi
}(b-a)}f(x)dx-f(\frac{2a+e^{i\varphi }(b-a)}{2})\right\vert  \\
&\leq &e^{i\varphi }(b-a)\left[ \int_{0}^{\frac{1}{2}}t\left\vert f^{\prime
}(a+te^{i\varphi }(b-a))\right\vert dt+\int_{\frac{1}{2}}^{1}(1-t)\left\vert
f^{\prime }(a+te^{i\varphi }(b-a))\right\vert dt\right]  \\
&\leq &e^{i\varphi }(b-a)\left( \int_{0}^{\frac{1}{2}}t^{p}dt\right) ^{\frac{%
1}{p}}\left( \int_{0}^{\frac{1}{2}}\left\vert f^{\prime }(a+te^{i\varphi
}(b-a))\right\vert ^{\frac{p}{p-1}}dt\right) ^{\frac{p-1}{p}} \\
&&+e^{i\varphi }(b-a)\left( \int_{\frac{1}{2}}^{1}\left( 1-t\right)
^{p}dt\right) ^{\frac{1}{p}}\left( \int_{\frac{1}{2}}^{1}\left\vert
f^{\prime }(a+te^{i\varphi }(b-a))\right\vert ^{\frac{p}{p-1}}dt\right) ^{%
\frac{p-1}{p}} \\
&\leq &\frac{e^{i\varphi }(b-a)}{2^{1+\frac{1}{p}}(p+1)^{\frac{1}{p}}}\left[
\int_{0}^{\frac{1}{2}}\left[ (1-t)\left\vert f^{\prime }(a)\right\vert ^{%
\frac{p}{p-1}}+t\left\vert f^{\prime }(b)\right\vert ^{\frac{p}{p-1}}\right]
dt\right] ^{\frac{p-1}{p}} \\
&&+\frac{e^{i\varphi }(b-a)}{2^{1+\frac{1}{p}}(p+1)^{\frac{1}{p}}}\left[
\int_{\frac{1}{2}}^{1}\left[ (1-t)\left\vert f^{\prime }(a)\right\vert ^{%
\frac{p}{p-1}}+t\left\vert f^{\prime }(b)\right\vert ^{\frac{p}{p-1}}\right]
dt\right] ^{\frac{p-1}{p}} \\
&=&\frac{e^{i\varphi }(b-a)}{16}\left( \frac{4}{p+1}\right) ^{\frac{1}{p}}%
\left[ \left( 3\left\vert f^{\prime }(a)\right\vert ^{\frac{p}{p-1}%
}+\left\vert f^{\prime }(b)\right\vert ^{\frac{p}{p-1}}\right) ^{\frac{p-1}{p%
}}+\left( \left\vert f^{\prime }(a)\right\vert ^{\frac{p}{p-1}}+3\left\vert
f^{\prime }(b)\right\vert ^{\frac{p}{p-1}}\right) ^{\frac{p-1}{p}}\right]
\end{eqnarray*}%
which completes the proof.
\end{proof}
\end{theorem}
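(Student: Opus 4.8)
The plan is to follow the proof of Theorem~\ref{tt4} verbatim up to the point where $\varphi$-convexity is invoked, and to interpose H\"older's inequality there, exactly as Theorem~\ref{tt3} is deduced from Theorem~\ref{tt2}. First I would start from the identity (\ref{7}) established in the proof of Theorem~\ref{tt4}, multiply it by $e^{i\varphi}(b-a)$, and apply the triangle inequality to get
\[
\left\vert \frac{1}{e^{i\varphi}(b-a)}\int_{a}^{a+e^{i\varphi}(b-a)}f(x)\,dx-f\left(\frac{2a+e^{i\varphi}(b-a)}{2}\right)\right\vert \leq e^{i\varphi}(b-a)\left[I_{1}+I_{2}\right],
\]
where $I_{1}=\int_{0}^{1/2}t\left\vert f^{\prime}(a+te^{i\varphi}(b-a))\right\vert dt$ and $I_{2}=\int_{1/2}^{1}(1-t)\left\vert f^{\prime}(a+te^{i\varphi}(b-a))\right\vert dt$.

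Next I would bound $I_{1}$ and $I_{2}$ \emph{separately} by H\"older's inequality with exponents $p$ and $p/(p-1)$, leaving the weights $t$ and $1-t$ with the first factor:
\[
I_{1}\leq \left(\int_{0}^{1/2}t^{p}\,dt\right)^{\frac{1}{p}}\left(\int_{0}^{1/2}\left\vert f^{\prime}(a+te^{i\varphi}(b-a))\right\vert^{\frac{p}{p-1}}dt\right)^{\frac{p-1}{p}},
\]
and analogously for $I_{2}$ with $(1-t)^{p}$ replacing $t^{p}$ and the range $[1/2,1]$. Here $\int_{0}^{1/2}t^{p}\,dt=\int_{1/2}^{1}(1-t)^{p}\,dt=\left(2^{p+1}(p+1)\right)^{-1}$, the second equality by the substitution $u=1-t$.

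Then I would apply the $\varphi$-convexity of $\left\vert f^{\prime}\right\vert^{p/(p-1)}$ to the surviving factors, bounding $\left\vert f^{\prime}(a+te^{i\varphi}(b-a))\right\vert^{\frac{p}{p-1}}\leq (1-t)\left\vert f^{\prime}(a)\right\vert^{\frac{p}{p-1}}+t\left\vert f^{\prime}(b)\right\vert^{\frac{p}{p-1}}$ and integrating. Since $\int_{0}^{1/2}(1-t)\,dt=\tfrac{3}{8}$, $\int_{0}^{1/2}t\,dt=\tfrac{1}{8}$ (and the mirrored values $\tfrac{1}{8}$, $\tfrac{3}{8}$ on $[1/2,1]$), this gives $\int_{0}^{1/2}\left\vert f^{\prime}(a+te^{i\varphi}(b-a))\right\vert^{\frac{p}{p-1}}dt\leq \tfrac{1}{8}\left(3\left\vert f^{\prime}(a)\right\vert^{\frac{p}{p-1}}+\left\vert f^{\prime}(b)\right\vert^{\frac{p}{p-1}}\right)$ and $\int_{1/2}^{1}\left\vert f^{\prime}(a+te^{i\varphi}(b-a))\right\vert^{\frac{p}{p-1}}dt\leq \tfrac{1}{8}\left(\left\vert f^{\prime}(a)\right\vert^{\frac{p}{p-1}}+3\left\vert f^{\prime}(b)\right\vert^{\frac{p}{p-1}}\right)$.

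Finally I would consolidate the constants. Raising the extracted factor $\tfrac{1}{8}$ to the power $\tfrac{p-1}{p}$ and combining with the H\"older constant $\left(2^{p+1}(p+1)\right)^{-1/p}$ produces the single coefficient $2^{-(p+1)/p}\cdot 8^{-(p-1)/p}(p+1)^{-1/p}=2^{-4+2/p}(p+1)^{-1/p}=\tfrac{1}{16}\left(\tfrac{4}{p+1}\right)^{1/p}$, and inserting this back into $e^{i\varphi}(b-a)(I_{1}+I_{2})$ yields exactly (\ref{8}). I expect no real obstacle here: the argument is just a routine combination of the identity (\ref{7}), H\"older's inequality, and $\varphi$-convexity of $\left\vert f^{\prime}\right\vert^{p/(p-1)}$; the only step that demands any care is this last arithmetic with the powers of $2$, of $p+1$, and the extracted $\tfrac{1}{8}$'s.
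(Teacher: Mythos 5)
Your proposal matches the paper's own proof essentially line for line: the same appeal to the identity (\ref{7}), the same term-by-term application of H\"older's inequality with exponents $p$ and $p/(p-1)$ on $[0,\tfrac12]$ and $[\tfrac12,1]$, the same use of $\varphi$-convexity of $\left\vert f^{\prime}\right\vert^{p/(p-1)}$, and the same consolidation of constants into $\tfrac{1}{16}\left(\tfrac{4}{p+1}\right)^{1/p}$. The arithmetic checks out, so there is nothing to add.
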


\begin{theorem}
\label{tt6} Under the assumptions of Theorem \ref{tt2}. Then, the following
inequality holds:%
\begin{eqnarray}
&&\left\vert \frac{1}{e^{i\varphi }(b-a)}\int_{a}^{a+e^{i\varphi
}(b-a)}f(x)dx-f(\frac{2a+e^{i\varphi }(b-a)}{2})\right\vert  \notag \\
&&  \label{9} \\
&\leq &\frac{e^{i\varphi }(b-a)}{4}\left( \frac{4}{p+1}\right) ^{\frac{1}{p}}%
\left[ \left\vert f^{\prime }(a)\right\vert +\left\vert f^{\prime
}(b)\right\vert \right] .  \notag
\end{eqnarray}

\begin{proof}
We consider the inequality (\ref{8}) i.e%
\begin{eqnarray}
&&\left\vert \frac{1}{e^{i\varphi }(b-a)}\int_{a}^{a+e^{i\varphi
}(b-a)}f(x)dx-f(\frac{2a+e^{i\varphi }(b-a)}{2})\right\vert   \notag \\
&&  \label{10} \\
&\leq &\frac{e^{i\varphi }(b-a)}{16}\left( \frac{4}{p+1}\right) ^{\frac{1}{p}%
}\left[ \left( 3\left\vert f^{\prime }(a)\right\vert ^{\frac{p}{p-1}%
}+\left\vert f^{\prime }(b)\right\vert ^{\frac{p}{p-1}}\right) ^{\frac{p-1}{p%
}}+\left( \left\vert f^{\prime }(a)\right\vert ^{\frac{p}{p-1}}+3\left\vert
f^{\prime }(b)\right\vert ^{\frac{p}{p-1}}\right) ^{\frac{p-1}{p}}\right] .
\notag
\end{eqnarray}%
Let $a_{1}=3\left\vert f^{\prime }(a)\right\vert ^{\frac{p}{p-1}}$, $%
b_{1}=\left\vert f^{\prime }(b)\right\vert ^{\frac{p}{p-1}}$, $%
a_{2}=\left\vert f^{\prime }(a)\right\vert ^{\frac{p}{p-1}}$, $%
b_{2}=3\left\vert f^{\prime }(b)\right\vert ^{\frac{p}{p-1}}$. Here $%
0<\left( p-1\right) /p<1$, for $p>1$. Using the fact that,%
\begin{equation*}
\sum\limits_{k=1}^{n}\left( a_{k}+b_{k}\right) ^{s}\leq
\sum_{k=1}^{n}a_{k}^{s}+\sum_{k=1}^{n}b_{k}^{s},
\end{equation*}%
for $\left( 0\leq s<1\right) $, $a_{1},a_{2},...,a_{n}\geq 0$, $%
b_{1},b_{2},...,b_{n}\geq 0$, we obtain%
\begin{eqnarray*}
&&\frac{e^{i\varphi }(b-a)}{16}\left( \frac{4}{p+1}\right) ^{\frac{1}{p}}%
\left[ \left( 3\left\vert f^{\prime }(a)\right\vert ^{\frac{p}{p-1}%
}+\left\vert f^{\prime }(b)\right\vert ^{\frac{p}{p-1}}\right) ^{\frac{p-1}{p%
}}+\left( \left\vert f^{\prime }(a)\right\vert ^{\frac{p}{p-1}}+3\left\vert
f^{\prime }(b)\right\vert ^{\frac{p}{p-1}}\right) ^{\frac{p-1}{p}}\right]  \\
&\leq &\frac{e^{i\varphi }(b-a)}{16}\left( \frac{4}{p+1}\right) ^{\frac{1}{p}%
}(3^{\frac{p-1}{p}}+1)\left[ \left\vert f^{\prime }(a)\right\vert
+\left\vert f^{\prime }(b)\right\vert \right]  \\
&\leq &\frac{e^{i\varphi }(b-a)}{16}\left( \frac{4}{p+1}\right) ^{\frac{1}{p}%
}4\left[ \left\vert f^{\prime }(a)\right\vert +\left\vert f^{\prime
}(b)\right\vert \right]
\end{eqnarray*}%
which completed proof.
\end{proof}
\end{theorem}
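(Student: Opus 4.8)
The plan is to obtain (\ref{9}) as an immediate consequence of the estimate (\ref{8}) already proved in Theorem \ref{tt5}, sharpened by the elementary subadditivity of the power function with exponent in $(0,1)$. So I would start by invoking (\ref{8}); it then remains only to bound the bracket
\[
\left( 3\left\vert f^{\prime }(a)\right\vert ^{\frac{p}{p-1}}+\left\vert f^{\prime }(b)\right\vert ^{\frac{p}{p-1}}\right) ^{\frac{p-1}{p}}+\left( \left\vert f^{\prime }(a)\right\vert ^{\frac{p}{p-1}}+3\left\vert f^{\prime }(b)\right\vert ^{\frac{p}{p-1}}\right) ^{\frac{p-1}{p}}.
\]
Writing $s=\frac{p-1}{p}$, one has $0<s<1$ because $p>1$, so $t\mapsto t^{s}$ is concave on $[0,\infty )$ and hence subadditive; this is exactly the finite-sum inequality $\sum_{k}(a_{k}+b_{k})^{s}\leq \sum_{k}a_{k}^{s}+\sum_{k}b_{k}^{s}$ recorded in the statement. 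Applying it with the pairs $(a_{1},b_{1})=\bigl(3\left\vert f^{\prime }(a)\right\vert ^{\frac{p}{p-1}},\ \left\vert f^{\prime }(b)\right\vert ^{\frac{p}{p-1}}\bigr)$ and $(a_{2},b_{2})=\bigl(\left\vert f^{\prime }(a)\right\vert ^{\frac{p}{p-1}},\ 3\left\vert f^{\prime }(b)\right\vert ^{\frac{p}{p-1}}\bigr)$ splits the two summands into four.

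Next I would exploit the exponent cancellation: since $\frac{p}{p-1}\cdot s=\frac{p}{p-1}\cdot \frac{p-1}{p}=1$, every factor of the form $\bigl(c\,\left\vert f^{\prime }(\cdot )\right\vert ^{\frac{p}{p-1}}\bigr)^{s}$ collapses to $c^{s}\left\vert f^{\prime }(\cdot )\right\vert $. Collecting the four resulting terms yields the bound $(3^{s}+1)\bigl(\left\vert f^{\prime }(a)\right\vert +\left\vert f^{\prime }(b)\right\vert \bigr)$ for the bracket. Finally, $0<s<1$ forces $3^{s}<3$, hence $3^{s}+1<4$; substituting this into (\ref{8}) and using $\tfrac{1}{16}\cdot 4=\tfrac14$ produces precisely the right-hand side of (\ref{9}).

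The argument is essentially a one-line corollary of Theorem \ref{tt5}, so there is no real obstacle; the only step deserving a word of care is the justification of $\left(x+y\right)^{s}\leq x^{s}+y^{s}$ for $0\leq s<1$, which follows from concavity of $t\mapsto t^{s}$ on $[0,\infty )$, together with the fortunate fact that the exponents $\frac{p}{p-1}$ and $\frac{p-1}{p}$ are reciprocal, so that the powers of $\left\vert f^{\prime }\right\vert$ disappear and leave the clean numerical factors $3^{(p-1)/p}$.
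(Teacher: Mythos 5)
Your proposal is correct and coincides with the paper's own proof: both start from inequality (\ref{8}), apply the subadditivity $\left(x+y\right)^{s}\leq x^{s}+y^{s}$ for $s=\frac{p-1}{p}\in (0,1)$ to the two bracketed terms, use the reciprocal exponents to reduce the bracket to $(3^{\frac{p-1}{p}}+1)\left[ \left\vert f^{\prime }(a)\right\vert +\left\vert f^{\prime }(b)\right\vert \right]$, and then bound $3^{\frac{p-1}{p}}+1$ by $4$. Your added remark that subadditivity follows from concavity of $t\mapsto t^{s}$ is a justification the paper omits, but the route is the same.
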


\begin{theorem}
\label{z} Let $f:\rightarrow (0,\infty )$ be a differentiable mapping on $%
K^{0}$. Assume $q\in
\mathbb{R}
$ with $q\geq 1$. If $\left\vert f^{\prime }\right\vert ^{q}$ is $\varphi -$%
convex function on the interval of real numbers $K^{0}$ (the interior of $K$%
) and $a,b\in K$ with $a<a+e^{i\varphi }(b-a)$. Then the following
inequality holds:%
\begin{eqnarray*}
&&\left\vert \frac{1}{e^{i\varphi }(b-a)}\int_{a}^{a+e^{i\varphi
}(b-a)}f(x)dx-f\left( \frac{2a+e^{i\varphi }(b-a)}{2}\right) \right\vert \\
&& \\
&\leq &\frac{e^{i\varphi }(b-a)}{8}\left[ \left( \frac{2\left\vert f^{\prime
}(a)\right\vert ^{q}+\left\vert f^{\prime }(b)\right\vert ^{q}}{3}\right) ^{%
\frac{1}{q}}+\left( \frac{\left\vert f^{\prime }(a)\right\vert
^{q}+2\left\vert f^{\prime }(b)\right\vert ^{q}}{3}\right) ^{\frac{1}{q}}%
\right] .
\end{eqnarray*}

\begin{proof}
From H\"{o}lder's inequality and by using (\ref{7}), we have%
\begin{eqnarray*}
&&\left\vert \frac{1}{e^{i\varphi }(b-a)}\int_{a}^{a+e^{i\varphi
}(b-a)}f(x)dx-f(\frac{2a+e^{i\varphi }(b-a)}{2})\right\vert  \\
&\leq &e^{i\varphi }(b-a)\left[ \int_{0}^{\frac{1}{2}}t\left\vert f^{\prime
}(a+te^{i\varphi }(b-a))\right\vert dt+\int_{\frac{1}{2}}^{1}(1-t)\left\vert
f^{\prime }(a+te^{i\varphi }(b-a))\right\vert dt\right]  \\
&\leq &e^{i\varphi }(b-a)\left( \int_{0}^{\frac{1}{2}}tdt\right) ^{\frac{1}{p%
}}\left( \int_{0}^{\frac{1}{2}}t\left\vert f^{\prime }(a+te^{i\varphi
}(b-a))\right\vert ^{q}dt\right) ^{\frac{1}{q}} \\
&&+e^{i\varphi }(b-a)\left( \int_{\frac{1}{2}}^{1}\left( 1-t\right)
dt\right) ^{\frac{1}{p}}\left( \int_{\frac{1}{2}}^{1}\left( 1-t\right)
\left\vert f^{\prime }(a+te^{i\varphi }(b-a))\right\vert ^{q}dt\right) ^{%
\frac{1}{q}} \\
&\leq &\frac{e^{i\varphi }(b-a)}{8^{\frac{1}{p}}}\left[ \int_{0}^{\frac{1}{2}%
}t\left[ (1-t)\left\vert f^{\prime }(a)\right\vert ^{q}+t\left\vert
f^{\prime }(b)\right\vert ^{q}\right] dt\right] ^{\frac{1}{q}} \\
&&+\frac{e^{i\varphi }(b-a)}{8^{\frac{1}{p}}}\left[ \int_{\frac{1}{2}%
}^{1}\left( 1-t\right) \left[ (1-t)\left\vert f^{\prime }(a)\right\vert
^{q}+t\left\vert f^{\prime }(b)\right\vert ^{q}\right] dt\right] ^{\frac{1}{q%
}} \\
&=&\frac{e^{i\varphi }(b-a)}{8}\left[ \left( \frac{2\left\vert f^{\prime
}(a)\right\vert ^{q}+\left\vert f^{\prime }(b)\right\vert ^{q}}{3}\right) ^{%
\frac{1}{q}}+\left( \frac{\left\vert f^{\prime }(a)\right\vert
^{q}+2\left\vert f^{\prime }(b)\right\vert ^{q}}{3}\right) ^{\frac{1}{q}}%
\right] .
\end{eqnarray*}%
The proof is completed.
\end{proof}
\end{theorem}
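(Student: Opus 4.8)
The plan is to run the same scheme used for Theorems \ref{tt4} and \ref{tt5}, but with the unweighted H\"older step replaced by the weighted power-mean inequality appropriate to the range $q\geq 1$. First I would start from the integration-by-parts identity (\ref{7}), which rewrites
\[
\frac{1}{e^{i\varphi}(b-a)}\int_{a}^{a+e^{i\varphi}(b-a)}f(x)\,dx-f\!\left(\frac{2a+e^{i\varphi}(b-a)}{2}\right)
\]
as $e^{i\varphi}(b-a)$ times $\int_{0}^{1/2} t\,f'(a+te^{i\varphi}(b-a))\,dt+\int_{1/2}^{1}(t-1)\,f'(a+te^{i\varphi}(b-a))\,dt$. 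Taking moduli and using the triangle inequality, the estimate splits into two pieces carrying the nonnegative weights $t$ on $[0,\tfrac12]$ and $1-t$ on $[\tfrac12,1]$ respectively.

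To each piece I would apply the power-mean inequality in the weighted form
\[
\int g\,h\,dt\le\Bigl(\int h\,dt\Bigr)^{1-1/q}\Bigl(\int g^{q}h\,dt\Bigr)^{1/q},
\]
with $g=\lvert f'(a+te^{i\varphi}(b-a))\rvert$ and $h$ equal to the relevant weight. This is exactly where the hypothesis $q\ge 1$ (in place of $p>1$) is used, and it is the one point of genuine care: the weight $t$ (resp.\ $1-t$) must be kept inside \emph{both} factors, unlike the constant-weight H\"older step of Theorem \ref{tt5}. The cheap factors are $\int_{0}^{1/2}t\,dt=\int_{1/2}^{1}(1-t)\,dt=\tfrac18$. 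Next I would insert the $\varphi$-convexity bound $\lvert f'(a+te^{i\varphi}(b-a))\rvert^{q}\le(1-t)\lvert f'(a)\rvert^{q}+t\lvert f'(b)\rvert^{q}$ coming from Definition \ref{d2} and evaluate the elementary moments
\[
\int_{0}^{1/2}t(1-t)\,dt=\tfrac1{12},\qquad\int_{0}^{1/2}t^{2}\,dt=\tfrac1{24},\qquad\int_{1/2}^{1}(1-t)^{2}\,dt=\tfrac1{24},\qquad\int_{1/2}^{1}(1-t)t\,dt=\tfrac1{12},
\]
so that the two expensive factors become $\tfrac1{24}\bigl(2\lvert f'(a)\rvert^{q}+\lvert f'(b)\rvert^{q}\bigr)$ and $\tfrac1{24}\bigl(\lvert f'(a)\rvert^{q}+2\lvert f'(b)\rvert^{q}\bigr)$.

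Assembling the two contributions, each is $e^{i\varphi}(b-a)\,(\tfrac18)^{1-1/q}\bigl(\tfrac1{24}(2\lvert f'(a)\rvert^{q}+\lvert f'(b)\rvert^{q})\bigr)^{1/q}$ and its symmetric partner; writing $\tfrac1{24}=\tfrac13\cdot\tfrac18$ turns this into $\tfrac{e^{i\varphi}(b-a)}{8}\bigl(\tfrac{2\lvert f'(a)\rvert^{q}+\lvert f'(b)\rvert^{q}}{3}\bigr)^{1/q}$ because $(\tfrac18)^{1-1/q}(\tfrac18)^{1/q}=\tfrac18$, and summing the two gives precisely the asserted bound. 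The only thing that could trip one up is this constant bookkeeping — one has to notice the identity $24=8\cdot 3$ so that the leftover power $(\tfrac18)^{1/q}$ recombines with $(\tfrac18)^{1-1/q}$ into the single factor $\tfrac18$; beyond that the computation is routine. Note also that for $q=1$ the power-mean step is an equality and the statement collapses to Theorem \ref{tt4}.
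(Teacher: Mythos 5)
Your proposal is correct and follows essentially the same route as the paper: the identity (\ref{7}), the split into the two weighted pieces, the weighted power-mean (H\"older) step with the weight kept in both factors, the $\varphi$-convexity bound on $\lvert f'\rvert^{q}$, and the same moment computations with the $24=8\cdot 3$ bookkeeping that recombines $(\tfrac18)^{1-1/q}(\tfrac18)^{1/q}$ into $\tfrac18$. No discrepancies to report.
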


\begin{theorem}
Under the assumptions of Theorem \ref{z}. Then the following inequality
holds:%
\begin{eqnarray*}
&&\left\vert \frac{1}{e^{i\varphi }(b-a)}\int_{a}^{a+e^{i\varphi
}(b-a)}f(x)dx-f\left( \frac{2a+e^{i\varphi }(b-a)}{2}\right) \right\vert  \\
&& \\
&\leq &\frac{e^{i\varphi }(b-a)}{8}(\frac{2^{\frac{1}{q}}+1}{3^{\frac{1}{q}}}%
)\left[ \left\vert f^{\prime }(a)\right\vert +\left\vert f^{\prime
}(b)\right\vert \right] .
\end{eqnarray*}

\begin{proof}
We consider the inequality (\ref{8}) i.e%
\begin{eqnarray*}
&&\left\vert \frac{1}{e^{i\varphi }(b-a)}\int_{a}^{a+e^{i\varphi
}(b-a)}f(x)dx-f(\frac{2a+e^{i\varphi }(b-a)}{2})\right\vert  \\
&& \\
&\leq &\frac{e^{i\varphi }(b-a)}{8}\left[ \left( \frac{2\left\vert f^{\prime
}(a)\right\vert ^{q}+\left\vert f^{\prime }(b)\right\vert ^{q}}{3}\right) ^{%
\frac{1}{q}}+\left( \frac{\left\vert f^{\prime }(a)\right\vert
^{q}+2\left\vert f^{\prime }(b)\right\vert ^{q}}{3}\right) ^{\frac{1}{q}}%
\right] .
\end{eqnarray*}%
Let $a_{1}=2\left\vert f^{\prime }(a)\right\vert ^{q}/3$, $b_{1}=\left\vert
f^{\prime }(b)\right\vert ^{q}/3$, $a_{2}=\left\vert f^{\prime
}(a)\right\vert ^{q}/3$, $b_{2}=2\left\vert f^{\prime }(b)\right\vert ^{q}/3$%
. Here $0<1/q<1$, for $q\geq 1$. Using the fact that%
\begin{equation*}
\sum\limits_{k=1}^{n}\left( a_{k}+b_{k}\right) ^{s}\leq
\sum_{k=1}^{n}a_{k}^{s}+\sum_{k=1}^{n}b_{k}^{s},
\end{equation*}%
for $\left( 0\leq s<1\right) $, $a_{1},a_{2},...,a_{n}\geq 0$, $%
b_{1},b_{2},...,b_{n}\geq 0$, we obtain%
\begin{eqnarray*}
&&\frac{e^{i\varphi }(b-a)}{8}\left[ \left( \frac{2\left\vert f^{\prime
}(a)\right\vert ^{q}+\left\vert f^{\prime }(b)\right\vert ^{q}}{3}\right) ^{%
\frac{1}{q}}+\left( \frac{\left\vert f^{\prime }(a)\right\vert
^{q}+2\left\vert f^{\prime }(b)\right\vert ^{q}}{3}\right) ^{\frac{1}{q}}%
\right]  \\
&\leq &\frac{e^{i\varphi }(b-a)}{8}(\frac{2^{\frac{1}{q}}+1}{3^{\frac{1}{q}}}%
)\left[ \left\vert f^{\prime }(a)\right\vert +\left\vert f^{\prime
}(b)\right\vert \right] .
\end{eqnarray*}%
This concludes the proof.
\end{proof}
\end{theorem}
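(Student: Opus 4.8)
The plan is to start from the inequality already established in Theorem \ref{z}, namely
\begin{equation*}
\left\vert \frac{1}{e^{i\varphi }(b-a)}\int_{a}^{a+e^{i\varphi }(b-a)}f(x)dx-f\left( \frac{2a+e^{i\varphi }(b-a)}{2}\right) \right\vert \leq \frac{e^{i\varphi }(b-a)}{8}\left[ \left( \frac{2\left\vert f^{\prime }(a)\right\vert ^{q}+\left\vert f^{\prime }(b)\right\vert ^{q}}{3}\right) ^{\frac{1}{q}}+\left( \frac{\left\vert f^{\prime }(a)\right\vert ^{q}+2\left\vert f^{\prime }(b)\right\vert ^{q}}{3}\right) ^{\frac{1}{q}}\right],
\end{equation*}
and then simply bound the right-hand side by a cleaner expression in $\left\vert f^{\prime }(a)\right\vert +\left\vert f^{\prime }(b)\right\vert$. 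So this is not an independent argument but a corollary-style refinement; the whole content is a scalar estimate on the two bracketed terms.

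The key step is the elementary superadditivity-type inequality
\begin{equation*}
\sum_{k=1}^{n}\left( a_{k}+b_{k}\right) ^{s}\leq \sum_{k=1}^{n}a_{k}^{s}+\sum_{k=1}^{n}b_{k}^{s},\qquad 0\leq s<1,\ a_{k},b_{k}\geq 0,
\end{equation*}
applied with $s=1/q$ (legitimate since $q\geq 1$ forces $0<1/q\leq 1$, and the case $q=1$ is trivial anyway). I would set $a_{1}=2\left\vert f^{\prime }(a)\right\vert ^{q}/3$, $b_{1}=\left\vert f^{\prime }(b)\right\vert ^{q}/3$ for the first bracketed term, and $a_{2}=\left\vert f^{\prime }(a)\right\vert ^{q}/3$, $b_{2}=2\left\vert f^{\prime }(b)\right\vert ^{q}/3$ for the second. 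Adding the two resulting bounds gives
\begin{equation*}
\left( \frac{2\left\vert f^{\prime }(a)\right\vert ^{q}+\left\vert f^{\prime }(b)\right\vert ^{q}}{3}\right) ^{\frac{1}{q}}+\left( \frac{\left\vert f^{\prime }(a)\right\vert ^{q}+2\left\vert f^{\prime }(b)\right\vert ^{q}}{3}\right) ^{\frac{1}{q}}\leq \frac{2^{\frac{1}{q}}+1}{3^{\frac{1}{q}}}\left[ \left\vert f^{\prime }(a)\right\vert +\left\vert f^{\prime }(b)\right\vert \right],
\end{equation*}
since each of $\left\vert f^{\prime }(a)\right\vert$ and $\left\vert f^{\prime }(b)\right\vert$ picks up a coefficient $(2/3)^{1/q}+(1/3)^{1/q}=(2^{1/q}+1)/3^{1/q}$ after collecting the four $s$-th power terms. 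Multiplying through by $e^{i\varphi }(b-a)/8$ yields the claimed inequality.

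There is essentially no obstacle here; the only thing to be careful about is the bookkeeping when splitting each bracket into its two summands and verifying that the coefficients of $\left\vert f^{\prime }(a)\right\vert$ and $\left\vert f^{\prime }(b)\right\vert$ recombine into the single constant $(2^{1/q}+1)/3^{1/q}$. One could remark, as a sanity check, that this bound is weaker than the one in Theorem \ref{z} (that is the price paid for the simpler form), and that for $q=1$ it collapses to $\tfrac{e^{i\varphi}(b-a)}{8}\left[\left\vert f'(a)\right\vert+\left\vert f'(b)\right\vert\right]$, consistent with the earlier first-order estimates. I would present the proof in the same two-line style as the proof of Theorem \ref{tt6}.
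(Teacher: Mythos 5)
Your proposal is correct and follows essentially the same route as the paper: it starts from the bound of Theorem \ref{z} and applies the subadditivity of $x\mapsto x^{s}$ for $0\leq s<1$ with $s=1/q$ and the identical choices $a_{1},b_{1},a_{2},b_{2}$, collecting the coefficients into $(2^{\frac{1}{q}}+1)/3^{\frac{1}{q}}$. Your remark handling the borderline case $q=1$ (where $s=1$ and the inequality holds with equality) is in fact slightly more careful than the paper's own wording.
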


\section{Hermite-Hadamard type inequalities for quasi $\protect\varphi -$%
convex functions}

In this section, we prove some new inequalities of Hermite-Hadamard for
quasi $\varphi -$convex function as follows:

\begin{theorem}
Let $f:K\rightarrow (0,\infty )$ be a differentiable mapping on $K^{0}$. If $%
\left\vert f^{\prime }\right\vert $ is quasi $\varphi -$convex function on
the interval of real numbers $K^{0}$ (the interior of $K$) and $a,b\in K$
with $a<$ $a+e^{i\varphi }(b-a)$. Then the following inequality holds:%
\begin{eqnarray*}
&&\left\vert \frac{1}{e^{i\varphi }(b-a)}\int_{a}^{a+e^{i\varphi
}(b-a)}f(x)dx-\frac{f(a)+f(a+te^{i\varphi }(b-a))}{2}\right\vert  \\
&& \\
&\leq &\frac{e^{i\varphi }(b-a)}{4}\max \{\left\vert f^{\prime
}(a)\right\vert ,\left\vert f^{\prime }(b)\right\vert \}.
\end{eqnarray*}

\begin{proof}
By quasi $\varphi -$convexity of $\left\vert f^{\prime }\right\vert $ and by
using (\ref{4}), we have%
\begin{eqnarray*}
&&\left\vert \frac{1}{e^{i\varphi }(b-a)}\int_{a}^{a+e^{i\varphi
}(b-a)}f(x)dx-\frac{f(a)+f(a+te^{i\varphi }(b-a))}{2}\right\vert  \\
&\leq &\frac{e^{i\varphi }(b-a)}{2}\int_{0}^{1}\left\vert (1-2t)\right\vert
\left\vert f^{\prime }(a+te^{i\varphi }(b-a))\right\vert dt \\
&\leq &\frac{e^{i\varphi }(b-a)}{2}\max \{\left\vert f^{\prime
}(a)\right\vert ,\left\vert f^{\prime }(b)\right\vert
\}\int_{0}^{1}\left\vert (1-2t)\right\vert dt \\
&\leq &\frac{e^{i\varphi }(b-a)}{4}\max \{\left\vert f^{\prime
}(a)\right\vert ,\left\vert f^{\prime }(b)\right\vert \}
\end{eqnarray*}%
which completes the proof.
\end{proof}
\end{theorem}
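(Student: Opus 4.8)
The plan is to recycle the integration-by-parts identity (\ref{4}) derived inside the proof of Theorem \ref{tt2}, and to replace the $\varphi$-convexity estimate used there by the weaker quasi-$\varphi$-convexity bound. First I would multiply (\ref{4}) through by $e^{i\varphi}(b-a)/2$ and transpose the two boundary terms, obtaining
\[
\frac{1}{e^{i\varphi}(b-a)}\int_{a}^{a+e^{i\varphi}(b-a)}f(x)\,dx-\frac{f(a)+f(a+e^{i\varphi}(b-a))}{2}=\frac{e^{i\varphi}(b-a)}{2}\int_{0}^{1}(1-2t)\,f^{\prime}(a+te^{i\varphi}(b-a))\,dt .
\]
Taking absolute values and moving the modulus inside the integral gives
\[
\left\vert\frac{1}{e^{i\varphi}(b-a)}\int_{a}^{a+e^{i\varphi}(b-a)}f(x)\,dx-\frac{f(a)+f(a+e^{i\varphi}(b-a))}{2}\right\vert\leq\frac{e^{i\varphi}(b-a)}{2}\int_{0}^{1}\left\vert 1-2t\right\vert\left\vert f^{\prime}(a+te^{i\varphi}(b-a))\right\vert\,dt ,
\]
which is exactly the step that opens the proof of Theorem \ref{tt2}.

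At this point I would invoke the hypothesis that $\left\vert f^{\prime}\right\vert$ is quasi-$\varphi$-convex in place of $\varphi$-convex. Writing the argument $a+te^{i\varphi}(b-a)$ in the form $u+te^{i\varphi}(v-u)$ with $u=a$ and $v=b$, Definition \ref{d4} gives $\left\vert f^{\prime}(a+te^{i\varphi}(b-a))\right\vert\leq\max\{\left\vert f^{\prime}(a)\right\vert,\left\vert f^{\prime}(b)\right\vert\}$ for every $t\in[0,1]$; here the $\varphi$-convexity of $K$ ensures $a+te^{i\varphi}(b-a)\in K$, so $f$ and $f^{\prime}$ are legitimately evaluated on $K$ along the whole path. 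Since this bound does not depend on $t$, it factors out of the integral and leaves
\[
\left\vert\frac{1}{e^{i\varphi}(b-a)}\int_{a}^{a+e^{i\varphi}(b-a)}f(x)\,dx-\frac{f(a)+f(a+e^{i\varphi}(b-a))}{2}\right\vert\leq\frac{e^{i\varphi}(b-a)}{2}\,\max\{\left\vert f^{\prime}(a)\right\vert,\left\vert f^{\prime}(b)\right\vert\}\int_{0}^{1}\left\vert 1-2t\right\vert\,dt .
\]
The only remaining computation is $\int_{0}^{1}\left\vert 1-2t\right\vert\,dt=\int_{0}^{1/2}(1-2t)\,dt+\int_{1/2}^{1}(2t-1)\,dt=\tfrac14+\tfrac14=\tfrac12$, which yields the asserted constant $e^{i\varphi}(b-a)/4$ and finishes the argument.

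I do not expect any real obstacle here: the whole proof is a one-line variant of Theorem \ref{tt2}. The single point worth a moment's care is that, unlike the $\varphi$-convex case, quasi-$\varphi$-convexity does not supply an affine interpolation $(1-t)\left\vert f^{\prime}(a)\right\vert+t\left\vert f^{\prime}(b)\right\vert$ that could be paired term by term against the weight $\left\vert 1-2t\right\vert$; instead the maximum $\max\{\left\vert f^{\prime}(a)\right\vert,\left\vert f^{\prime}(b)\right\vert\}$ must be pulled out from under the integral sign \emph{before} one integrates the weight. All the remaining manipulations with the factors $e^{i\varphi}$ are purely formal and proceed exactly as in the earlier proofs, since $0\leq\varphi\leq\pi/2$ is fixed throughout.
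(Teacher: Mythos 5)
Your proposal is correct and follows exactly the same route as the paper's own proof: both start from the integration-by-parts identity (\ref{4}), pass the modulus inside the integral, pull out the constant bound $\max\{\left\vert f^{\prime}(a)\right\vert,\left\vert f^{\prime}(b)\right\vert\}$ supplied by quasi-$\varphi$-convexity, and evaluate $\int_{0}^{1}\left\vert 1-2t\right\vert\,dt=\tfrac12$. No differences worth noting.
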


\begin{theorem}
Let $f:K\rightarrow (0,\infty )$ be a differentiable mapping on $K^{0}$%
.Assume $p\in
\mathbb{R}
$ with $p>1$. If $\left\vert f^{\prime }\right\vert ^{p/p-1}$ is quasi $%
\varphi -$convex function on the interval of real numbers $K^{0}$ (the
interior of $K$) and $a,b\in K$ with $a<e^{i\varphi }(b-a)$. Then the
following inequality holds:%
\begin{eqnarray*}
&&\left\vert \frac{1}{e^{i\varphi }(b-a)}\int_{a}^{a+e^{i\varphi
}(b-a)}f(x)dx-\frac{f(a)+f(a+te^{i\varphi }(b-a))}{2}\right\vert \\
&\leq &\frac{e^{i\varphi }(b-a)}{2(p+1)^{\frac{1}{p}}}\left[ \max
\{\left\vert f^{\prime }(a)\right\vert ^{\frac{p}{p-1}},\left\vert f^{\prime
}(b)\right\vert ^{\frac{p}{p-1}}\}\right] ^{\frac{p-1}{p}}.
\end{eqnarray*}
\end{theorem}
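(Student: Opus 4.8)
The plan is to imitate the proof of Theorem~\ref{tt2} (the $p=1$-type argument using Hölder's inequality) but now track the $\max$ coming from quasi-$\varphi$-convexity instead of the convex combination. First I would start from the same integral identity~(\ref{4}) that was established in the proof of Theorem~\ref{tt2}, namely
\begin{equation*}
\int_{0}^{1}(1-2t)f^{\prime }(a+te^{i\varphi }(b-a))\,dt=-\frac{f(a)+f(a+e^{i\varphi }(b-a))}{e^{i\varphi }(b-a)}+\frac{2}{e^{2i\varphi }(b-a)^{2}}\int_{a}^{a+e^{i\varphi }(b-a)}f(x)\,dx,
\end{equation*}
so that the quantity to be estimated equals $\dfrac{e^{i\varphi }(b-a)}{2}\left|\int_{0}^{1}(1-2t)f^{\prime }(a+te^{i\varphi }(b-a))\,dt\right|$.

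Next I would apply Hölder's inequality with exponents $p$ and $p/(p-1)$ to the integral $\int_{0}^{1}|1-2t|\,|f^{\prime }(a+te^{i\varphi }(b-a))|\,dt$, splitting the integrand as $|1-2t|\cdot|f^{\prime }(a+te^{i\varphi }(b-a))|$, to obtain the product
\begin{equation*}
\left(\int_{0}^{1}|1-2t|^{p}\,dt\right)^{\frac{1}{p}}\left(\int_{0}^{1}|f^{\prime }(a+te^{i\varphi }(b-a))|^{\frac{p}{p-1}}\,dt\right)^{\frac{p-1}{p}}.
\end{equation*}
The first factor is the elementary integral $\int_{0}^{1}|1-2t|^{p}\,dt=\frac{1}{p+1}$, exactly as in the proof of Theorem~\ref{tt3}. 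For the second factor I would invoke the quasi-$\varphi$-convexity of $|f^{\prime}|^{p/(p-1)}$: for each $t\in[0,1]$, $|f^{\prime }(a+te^{i\varphi }(b-a))|^{\frac{p}{p-1}}\le\max\{|f^{\prime }(a)|^{\frac{p}{p-1}},|f^{\prime }(b)|^{\frac{p}{p-1}}\}$, which is a constant in $t$, so the integral is bounded by that same constant. Combining, the second factor is at most $\left[\max\{|f^{\prime }(a)|^{\frac{p}{p-1}},|f^{\prime }(b)|^{\frac{p}{p-1}}\}\right]^{\frac{p-1}{p}}$, and multiplying everything together with the prefactor $\frac{e^{i\varphi }(b-a)}{2}$ and the constant $(p+1)^{-1/p}$ yields precisely the claimed bound.

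There is no real obstacle here — the only mild subtlety is purely bookkeeping: one must be careful that the quasi-$\varphi$-convexity hypothesis is stated for $|f'|^{p/(p-1)}$ (not $|f'|$), so the $\max$ must be pulled out \emph{before} taking the outer power $(p-1)/p$, and that the definition of quasi-$\varphi$-convexity is applied with $u=a$, $v=b$, and parameter $t$, using that $a+te^{i\varphi}(b-a)\in K$ for all $t\in[0,1]$ since $K$ is $\varphi$-convex. Everything else is a verbatim transcription of the Theorem~\ref{tt3} computation with the convex combination $(1-t)|f'(a)|^{p/(p-1)}+t|f'(b)|^{p/(p-1)}$ replaced by the constant majorant $\max\{|f'(a)|^{p/(p-1)},|f'(b)|^{p/(p-1)}\}$; since $\int_0^1 1\,dt=1$, the averaging constant that appeared in~(\ref{5}) collapses and one is left with the cleaner $\max$ form.
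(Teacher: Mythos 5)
Your proposal is correct and follows essentially the same route as the paper's own proof: the identity (\ref{4}), H\"older's inequality with exponents $p$ and $p/(p-1)$, the evaluation $\int_{0}^{1}\left\vert 1-2t\right\vert ^{p}dt=\frac{1}{p+1}$, and the pointwise bound by the constant $\max \{\left\vert f^{\prime }(a)\right\vert ^{p/(p-1)},\left\vert f^{\prime }(b)\right\vert ^{p/(p-1)}\}$ from quasi-$\varphi$-convexity. In fact your write-up is cleaner than the printed one, which transposes the exponents $\frac{p}{p-1}$ and $\frac{p-1}{p}$ in the H\"older step; your version states them correctly.
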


\begin{proof}
By quasi $\varphi -$convexity of $\left\vert f^{\prime }\right\vert ^{p/p-1}$
and by using (\ref{4}), we have%
\begin{eqnarray*}
&&\left\vert \frac{1}{e^{i\varphi }(b-a)}\int_{a}^{a+e^{i\varphi
}(b-a)}f(x)dx-\frac{f(a)+f(a+te^{i\varphi }(b-a))}{2}\right\vert  \\
&\leq &\frac{e^{i\varphi }(b-a)}{2}\int_{0}^{1}\left\vert (1-2t)\right\vert
\left\vert f^{\prime }(a+te^{i\varphi }(b-a))\right\vert dt \\
&\leq &\frac{e^{i\varphi }(b-a)}{2}\left( \int_{0}^{1}\left\vert
(1-2t)\right\vert ^{p}dt\right) ^{\frac{1}{p}}\left( \int_{0}^{1}\left\vert
f^{\prime }(a+te^{i\varphi }(b-a))\right\vert ^{\frac{p-1}{p}}dt\right) ^{%
\frac{p}{p-1}} \\
&\leq &\frac{e^{i\varphi }(b-a)}{2}\left( \int_{0}^{1}\left\vert
(1-2t)\right\vert ^{p}dt\right) ^{\frac{1}{p}}\left( \int_{0}^{1}\max
\{\left\vert f^{\prime }(a)\right\vert ^{\frac{p}{p-1}},\left\vert f^{\prime
}(b)\right\vert ^{\frac{p}{p-1}}\}dt\right) ^{\frac{p}{p-1}} \\
&\leq &\frac{e^{i\varphi }(b-a)}{2(p+1)^{\frac{1}{p}}}\left[ \max
\{\left\vert f^{\prime }(a)\right\vert ^{\frac{p}{p-1}},\left\vert f^{\prime
}(b)\right\vert ^{\frac{p}{p-1}}\}\right] ^{\frac{p-1}{p}}
\end{eqnarray*}%
which completes the proof.
\end{proof}

\begin{theorem}
Let $f:K\rightarrow (0,\infty )$ be a differentiable mapping on $K^{0}$ . If
$\left\vert f^{\prime }\right\vert $ is quasi $\varphi -$convex function on
the interval of real numbers $K^{0}$ (the interior of $K$) and $a,b\in K$
with $a<$ $a+e^{i\varphi }(b-a)$. Then the following inequality holds:%
\begin{eqnarray*}
&&\left\vert \frac{1}{e^{i\varphi }(b-a)}\int_{a}^{a+e^{i\varphi
}(b-a)}f(x)dx-\frac{f(a+te^{i\varphi }(b-a))}{2}\right\vert \\
&& \\
&\leq &\frac{e^{i\varphi }(b-a)}{4}\max \{\left\vert f^{\prime
}(a)\right\vert ,\left\vert f^{\prime }(b)\right\vert \}.
\end{eqnarray*}

\begin{proof}
By quasi $\varphi -$convexity of $\left\vert f^{\prime }\right\vert $ and by
using (\ref{7}), we have%
\begin{eqnarray*}
&&\left\vert \frac{1}{e^{i\varphi }(b-a)}\int_{a}^{a+e^{i\varphi
}(b-a)}f(x)dx-f\left( \frac{2a+e^{i\varphi }(b-a)}{2}\right) \right\vert  \\
&\leq &e^{i\varphi }(b-a)\left[ \int_{0}^{\frac{1}{2}}t\left\vert f^{\prime
}(a+te^{i\varphi }(b-a))\right\vert dt+\int_{\frac{1}{2}}^{1}(1-t)\left\vert
f^{\prime }(a+te^{i\varphi }(b-a))\right\vert dt\right]  \\
&\leq &e^{i\varphi }(b-a)\max \{\left\vert f^{\prime }(a)\right\vert
,\left\vert f^{\prime }(b)\right\vert \}\left[ \int_{0}^{\frac{1}{2}%
}tdt+\int_{\frac{1}{2}}^{1}(1-t)dt\right]  \\
&\leq &\frac{e^{i\varphi }(b-a)}{4}\max \{\left\vert f^{\prime
}(a)\right\vert ,\left\vert f^{\prime }(b)\right\vert \}.
\end{eqnarray*}%
This concludes the proof.
\end{proof}
\end{theorem}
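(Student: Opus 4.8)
The plan is to reuse the integration-by-parts identity \eqref{7} established in the proof of Theorem \ref{tt4}, reading the quantity to be estimated as the difference between the integral mean of $f$ over $K$ and the value of $f$ at the midpoint $\tfrac{2a+e^{i\varphi}(b-a)}{2}$ (which is what the displayed term is meant to denote). Thus the first step is simply to invoke \eqref{7}, which gives
\[
\frac{1}{e^{i\varphi}(b-a)}\int_{a}^{a+e^{i\varphi}(b-a)}f(x)\,dx-f\!\left(\frac{2a+e^{i\varphi}(b-a)}{2}\right)
= -\,e^{i\varphi}(b-a)\!\left[\int_{0}^{1/2}t\,f'(a+te^{i\varphi}(b-a))\,dt+\int_{1/2}^{1}(t-1)\,f'(a+te^{i\varphi}(b-a))\,dt\right],
\]
and then to pass to absolute values, using $|t|=t$ on $[0,\tfrac12]$ and $|t-1|=1-t$ on $[\tfrac12,1]$. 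No new identity needs to be derived.

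The second step is to apply the hypothesis. Since $K$ is $\varphi$-convex, every point $a+te^{i\varphi}(b-a)$ with $t\in[0,1]$ lies in $K$; and since $|f'|$ is quasi-$\varphi$-convex, Definition \ref{d4} gives $\left|f'(a+te^{i\varphi}(b-a))\right|\le\max\{|f'(a)|,|f'(b)|\}$ for all such $t$. Inserting this uniform bound into both integrals pulls the constant $\max\{|f'(a)|,|f'(b)|\}$ outside and reduces the estimate to the elementary computation $\int_{0}^{1/2}t\,dt+\int_{1/2}^{1}(1-t)\,dt=\tfrac18+\tfrac18=\tfrac14$.

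Combining the two steps, multiplying through by $e^{i\varphi}(b-a)$, yields exactly
\[
\left|\frac{1}{e^{i\varphi}(b-a)}\int_{a}^{a+e^{i\varphi}(b-a)}f(x)\,dx-f\!\left(\frac{2a+e^{i\varphi}(b-a)}{2}\right)\right|\le\frac{e^{i\varphi}(b-a)}{4}\max\{|f'(a)|,|f'(b)|\},
\]
which is the assertion. I do not expect any genuine obstacle: the argument is a direct transcription of Ion's quasi-convex midpoint estimate into the $\varphi$-convex setting via identity \eqref{7}. The only point requiring a little care is to use the \emph{split} kernel $(t,1-t)$ coming from \eqref{7} rather than the kernel $|1-2t|$ coming from \eqref{4}; the two give the same numerical constant $\tfrac14$ here, so one must be careful not to conflate the midpoint identity with the trapezoid identity when citing the earlier theorems.
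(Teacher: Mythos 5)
Your proposal is correct and follows essentially the same route as the paper's own proof: invoke identity (\ref{7}), bound $\left\vert f^{\prime }(a+te^{i\varphi }(b-a))\right\vert$ uniformly by $\max \{\left\vert f^{\prime }(a)\right\vert ,\left\vert f^{\prime }(b)\right\vert \}$ via quasi-$\varphi$-convexity, and compute $\int_{0}^{1/2}t\,dt+\int_{1/2}^{1}(1-t)\,dt=\tfrac14$. Your remark that the displayed term in the theorem statement should be read as $f\left( \frac{2a+e^{i\varphi }(b-a)}{2}\right)$ matches what the paper's proof actually estimates.
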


\begin{theorem}
Let $f:K\rightarrow (0,\infty )$ be a differentiable mapping on $K^{0}$%
.Assume $p\in
\mathbb{R}
$ with $p>1$. If $\left\vert f^{\prime }\right\vert ^{p/p-1}$ is quasi $%
\varphi -$convex function on the interval of real numbers $K^{0}$ (the
interior of $K$) and $a,b\in K$ with $a<e^{i\varphi }(b-a)$. Then the
following inequality holds:%
\begin{eqnarray*}
&&\left\vert \frac{1}{e^{i\varphi }(b-a)}\int_{a}^{a+e^{i\varphi
}(b-a)}f(x)dx-\frac{f(a+te^{i\varphi }(b-a))}{2}\right\vert  \\
&\leq &\frac{e^{i\varphi }(b-a)}{2(p+1)^{\frac{1}{p}}}\left[ \max
\{\left\vert f^{\prime }(a)\right\vert ^{\frac{p}{p-1}},\left\vert f^{\prime
}(b)\right\vert ^{\frac{p}{p-1}}\}\right] ^{\frac{p-1}{p}}.
\end{eqnarray*}
\end{theorem}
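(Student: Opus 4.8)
The plan is to combine the identity \eqref{7} with H\"older's inequality exactly as in the proof of Theorem \ref{tt5}, the only change being that the quasi-$\varphi$-convexity of $\left\vert f^{\prime }\right\vert ^{p/p-1}$ replaces the $\varphi$-convexity used there. First I would start from the midpoint identity \eqref{7}, which gives
\begin{equation*}
\left\vert \frac{1}{e^{i\varphi }(b-a)}\int_{a}^{a+e^{i\varphi }(b-a)}f(x)dx-f\left( \frac{2a+e^{i\varphi }(b-a)}{2}\right) \right\vert \leq e^{i\varphi }(b-a)\left[ \int_{0}^{\frac{1}{2}}t\left\vert f^{\prime }(a+te^{i\varphi }(b-a))\right\vert dt+\int_{\frac{1}{2}}^{1}(1-t)\left\vert f^{\prime }(a+te^{i\varphi }(b-a))\right\vert dt\right] .
\end{equation*}

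Next I would apply H\"older's inequality to each of the two integrals, splitting the weights as $t=t^{1/p}\cdot t^{(p-1)/p}$ on $[0,\tfrac12]$ and $(1-t)=(1-t)^{1/p}\cdot(1-t)^{(p-1)/p}$ on $[\tfrac12,1]$; this produces the factors $\left(\int_{0}^{1/2}t^{p}\,dt\right)^{1/p}$ and $\left(\int_{1/2}^{1}(1-t)^{p}\,dt\right)^{1/p}$, each equal to $\left(\tfrac{1}{2^{p+1}(p+1)}\right)^{1/p}$, times $\left(\int_{0}^{1/2}\left\vert f^{\prime}(a+te^{i\varphi}(b-a))\right\vert^{p/(p-1)}dt\right)^{(p-1)/p}$ and the corresponding integral over $[\tfrac12,1]$. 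Then quasi-$\varphi$-convexity of $\left\vert f^{\prime}\right\vert^{p/(p-1)}$ bounds the integrand pointwise by $\max\{\left\vert f^{\prime}(a)\right\vert^{p/(p-1)},\left\vert f^{\prime}(b)\right\vert^{p/(p-1)}\}$, so each of the two remaining integrals is at most $\tfrac12\max\{\left\vert f^{\prime}(a)\right\vert^{p/(p-1)},\left\vert f^{\prime}(b)\right\vert^{p/(p-1)}\}$, and the $(p-1)/p$-th power of that contributes a factor $\left(\tfrac12\right)^{(p-1)/p}$.

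Collecting the constants, the two symmetric terms add to give $e^{i\varphi}(b-a)\cdot 2\cdot\left(\tfrac{1}{2^{p+1}(p+1)}\right)^{1/p}\left(\tfrac12\right)^{(p-1)/p}\left[\max\{\left\vert f^{\prime}(a)\right\vert^{p/(p-1)},\left\vert f^{\prime}(b)\right\vert^{p/(p-1)}\}\right]^{(p-1)/p}$, and a routine simplification of $2\cdot 2^{-(p+1)/p}\cdot 2^{-(p-1)/p}=2^{1-(p+1)/p-(p-1)/p}=2^{-1}$ reduces the constant to $\tfrac{1}{2(p+1)^{1/p}}$, which is exactly the claimed bound. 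The only genuinely delicate point is the arithmetic bookkeeping of the powers of $2$ and of $(p+1)$; everything else is a direct transcription of the argument already used for Theorem \ref{tt5}, with $\max$ in place of the convex combination. I would therefore present the chain of inequalities in a single \verb|eqnarray*| display, pausing only to record the value $\int_{0}^{1/2}t^{p}\,dt=\int_{1/2}^{1}(1-t)^{p}\,dt=\tfrac{1}{2^{p+1}(p+1)}$ explicitly so the constant collapse is transparent.
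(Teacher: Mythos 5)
Your proposal is correct and is essentially the paper's own proof: both start from the midpoint identity (\ref{7}) (the paper's citation of (\ref{4}) there is a typo), apply H\"older's inequality on each half-interval with the factors $\bigl(\int_{0}^{1/2}t^{p}dt\bigr)^{1/p}=\bigl(\int_{1/2}^{1}(1-t)^{p}dt\bigr)^{1/p}=\bigl(\tfrac{1}{2^{p+1}(p+1)}\bigr)^{1/p}$, bound the remaining integrals by $\tfrac12\max\{|f^{\prime}(a)|^{p/(p-1)},|f^{\prime}(b)|^{p/(p-1)}\}$ via quasi-$\varphi$-convexity, and collapse the powers of $2$ to get the constant $\tfrac{1}{2(p+1)^{1/p}}$; your arithmetic is right and you even supply the bookkeeping the paper omits. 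One small inconsistency in your write-up: the splitting you announce, $t=t^{1/p}\cdot t^{(p-1)/p}$, is the power-mean split (which would yield $(\int_{0}^{1/2}t\,dt)^{1/p}$, as in Theorem \ref{z}), whereas the factors you actually compute with come from the plain split $t\cdot|f^{\prime}|$ with $t\in L^{p}$ --- keep the latter description, since that is the estimate you (and the paper) in fact use.
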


\begin{proof}
By quasi $\varphi -$convexity of $\left\vert f^{\prime }\right\vert ^{p/p-1}$
and by using (\ref{4}), we have%
\begin{eqnarray*}
&&\left\vert \frac{1}{e^{i\varphi }(b-a)}\int_{a}^{a+e^{i\varphi
}(b-a)}f(x)dx-\frac{f(a+te^{i\varphi }(b-a))}{2}\right\vert  \\
&\leq &e^{i\varphi }(b-a)\left[ \int_{0}^{\frac{1}{2}}t\left\vert f^{\prime
}(a+te^{i\varphi }(b-a))\right\vert dt+\int_{\frac{1}{2}}^{1}(1-t)\left\vert
f^{\prime }(a+te^{i\varphi }(b-a))\right\vert dt\right]  \\
&\leq &e^{i\varphi }(b-a)\left( \int_{0}^{\frac{1}{2}}t^{p}dt\right) ^{\frac{%
1}{p}}\left( \int_{0}^{\frac{1}{2}}\left\vert f^{\prime }(a+te^{i\varphi
}(b-a))\right\vert ^{\frac{p}{p-1}}dt\right) ^{\frac{p-1}{p}} \\
&&+e^{i\varphi }(b-a)\left( \int_{\frac{1}{2}}^{1}\left( 1-t\right)
^{p}dt\right) ^{\frac{1}{p}}\left( \int_{\frac{1}{2}}^{1}\left\vert
f^{\prime }(a+te^{i\varphi }(b-a))\right\vert ^{\frac{p}{p-1}}dt\right) ^{%
\frac{p-1}{p}} \\
&\leq &\frac{e^{i\varphi }(b-a)}{2(p+1)^{\frac{1}{p}}}\left[ \max
\{\left\vert f^{\prime }(a)\right\vert ^{\frac{p}{p-1}},\left\vert f^{\prime
}(b)\right\vert ^{\frac{p}{p-1}}\}\right] ^{\frac{p-1}{p}}
\end{eqnarray*}%
which completes the proof.
\end{proof}

\end{document}